\documentclass[12pt,reqno]{amsart}
\usepackage{geometry}                
\usepackage{subfig}
\usepackage{amsmath,amssymb}
\usepackage{graphicx}
\usepackage{amssymb,latexsym, amsmath}
\usepackage{amsfonts,amsthm}
\usepackage{amscd}
\usepackage{mathrsfs}
\usepackage{hyperref}
\usepackage{eucal}
\usepackage{multicol}
\usepackage{epstopdf}
\usepackage{amsgen}
\usepackage{xspace}
\usepackage{verbatim}
\usepackage{stmaryrd}
\usepackage{graphicx}
\usepackage{caption}

\newcommand{\gw}{\Omega}
\newcommand{\ap}{\alpha}
\newcommand{\ga}{\gamma}

\newcommand{\gb}{\beta}
\newcommand{\G}{\Gamma}
\newcommand{\gl}{\lambda}

\newcommand{\ms}{\mathscr}

\newcommand{\ve}{\varepsilon}
\newcommand{\dca}{D_c^\alpha}

\newcommand{\beq}{\begin{equation}}
\newcommand{\eeq}{\end{equation}}
\newcommand{\bea}{\begin{align}}
\newcommand{\eea}{\end{align}}
\newcommand{\bthm}{\begin{theorem}}
\newcommand{\ethm}{\end{theorem}}
\newcommand{\bpr}{\begin{proof}}
\newcommand{\epr}{\end{proof}}
\newcommand{\bcl}{\begin{corollary}}
\newcommand{\ecl}{\end{corollary}}
\newcommand{\bpn}{\begin{proposition}}
\newcommand{\epn}{\end{proposition}}
\newcommand{\bre}{\begin{remark}}
\newcommand{\ere}{\end{remark}}
\newcommand{\bdf}{\begin{definition}}
\newcommand{\edf}{\end{definition}}
\newcommand{\bss}{\begin{align*}}
\newcommand{\ess}{\end{align*}}

\newcommand{\bl}{\label}

\newtheorem{theorem}{Theorem}[section]
\newtheorem{corollary}[theorem]{Corollary}

\newtheorem{lemma}[theorem]{Lemma}
\newtheorem{proposition}[theorem]{Proposition}

\theoremstyle{definition}
\newtheorem{definition}[theorem]{Definition}
\theoremstyle{remark}
\newtheorem{remark}{Remark}

\numberwithin{equation}{section}

\begin{document}

\title[Synchronization of Hodgkin-Huxley-Wilson Neural Networks]{Global Dynamics and Synchronization of Hodgkin-Huxley-Wilson Neural Networks}

\author[Y. You]{Yuncheng You}
\address{University of South Florida, Tampa, FL 33620, USA}
\email{yygwmp@gmail.com}

\thanks{}

\subjclass[2020]{34D06, 34A08, 37N25, 92B20, 92C20}

\date{\today}


\keywords{Hodgkin-Huxley-Wilson neural network, synchronization, global dynamics, coupling threshold condition, fractional memristive neural network.}

\begin{abstract} 
Hodgkin-Huxley equations as a monumental breakthrough in biological and physiological theory of the 20th century had been distilled into many simplified models to study, but the model itself not being fully investigated in terms of global and asymptotic dynamics due to its strong nonlinearity and higher dimensionality. In this paper a new model called Hodgkin-Huxley-Wilson neural networks is proposed and investigated. This model captures the essential features of the nonlinearity and the conductances of two dominant ionic current channels of sodium and potassium coupled with the membrane voltage in gated firing functionality of biological neural networks by the original Hodgkin-Huxley model. Through uniform and sharp \emph{a priori} estimates purely by mathematical hard analysis on the solutions of the model equations and the derived interneuron differencing equations, it is rigorously proved that global solution dynamics are robustly dissipative with a sharp ultimate bound and that complete synchronization of the Hodgkin-Huxley-Wilson neural networks at an exponential convergence rate occurs if the interneuron coupling strength satisfies an explicitly computable threshold condition. Synchronization result with fractional-power convergence rate instead is also proved for fractional memristive Hodgkin-Huxley-Wilson neural networks. 
\end{abstract}

\maketitle
 
\section{\textbf{Introduction}}

All biological neurons especially human neuron networks are extremely complex with intricate behavior and functions. To study the neuronal dynamics and brain-like spiking neural networks, one can build mathematical models characterizing the key variables of neurons, which are the voltage potential of neuron cell membrane and the salient ions currents. The primary mathematical model of biological neurons is the classical Hodgkin-Huxley equations initiated in \cite{HH} and four more papers \cite{HH1, HH2, HH3, HHB} based on plausibly equivalent electric circuits and large amount of data from experiments by the British physiologists and Nobel laureates (1963) Alan Lloyd Hodgkin and Andrew Fielding Huxley in 1952 based on the study of the giant axon of squid. Hodgkin-Huxley equations laid down a theoretical foundation for neuroscience \cite{KS, ET} as well as vast researches of brain-like spiking neural networks.

The original Hodgkin-Huxley neuron model consists of four coupled nonlinear ordinary differential equations describing how the membrane potential $V(t)$ of a neuron cell changes and governed by ion channel gating of three variables $ x = (m, n, h)$:
\beq \bl{hhe}
	\begin{split}
	C_m \frac{dV}{dt} &= I -g_{N_a} m^3 h (V - E_{Na}) - g_K n^4 (V - E_k) - g_L (V - E_L),   \\
	\frac{dm}{dt} &= \ap_m (V) (1 - m) - \gb_m (V) m,  \\
	\frac{dn}{dt} &= \ap_n (V) (1 - n) - \gb_n (V) n,  \\
	\frac{dh}{dt} &= \ap_h (V) (1 - h) - \gb_h (V) h,  
	\end{split}
\eeq
where the gating current variables of sodium ions $N_a^+$ denoted by $m(V)$, potassium ions $K^+$ denoted by $n(V)$, and the other (leakage) ions denoted by $h(V)$ can be called sodium, potassium, and leakage activation, respectively. Total external current is denoted by $I$. The positive constant $C_m$ is the membrane capacitance, without loss of generality which is usually taken as $1\mu F/cm^2$.

For the original model of Hodgkin-Huxley equations \cite{KS, ET}, the ionic channel conductance parameters are $g_{N_a} = 120,  g_K = 36,  g_L = 0.3$ with the physics unit of electrical conductivity $ms/cm^2$ (milliSiemens per $cm^2$). The Nernst equilibrium potentials of the voltage-gated channels are $E_{N_a} = 50, E_K = -70, E_L = - 10$ in the unit $mV$.

The voltage-dependent rates $(\ap(V), \gb (V))$ of the ionic channels reflect the speed of depolarization and repolarization gating. They were originally set by experiment data and curve-fitting diagrams using combination of the exponential negative functions $\exp (- \gl (V- V_e))$. It paves the way to execute quadratic nonlinear polynomials as  appropriate approximation in Wilson model for human and mammalian neurons \cite{Wi} and in the new model of neural networks here in this work.. 

The modern description presents the gate transition rates from closed to open and vice versa in terms of probability so that the equation for an ion channel $x(V)$ can take the following form,
\beq \bl{mta}
	\frac{dx}{dt} = \ap_x (V) (1 - x) - \gb_x (V) x = (x_\infty (V) - x)/\tau_x (V)
\eeq
where 
\beq \bl{mta}
	x_\infty (V) = \frac{\ap_x(V)}{\ap_x(V) + \gb_x(V)} = \frac{1}{1 + \exp{(- \gl_s (V - V_f))}}, \;\,  \tau_x (V) = \frac{1}{\ap_x(V) + \gb_x(V)}.
\eeq
The sigmoidal function $x_\infty (V)$ has two parameters: $V_{f}$ is the half-activated voltage and the slope factor $\gl_s$ determines the sensitivity of this ion gating to voltage change. 

Hodgkin-Huxley neuron model is a highly nonlinear four-dimensional system of ordinary differential equations. Scientists proposed many simplified and modified neuron models: two-dimensional FitzHugh-Nagumo model \cite{FHN}, three-dimensional Hindmarsh-Rose model \cite{HR}, Morris-Lecar model \cite{ML}, Wilson model \cite{Wi}, Izhikevich model \cite{Zh}, fractional-order Hodgkin-Huxley neuron model, etc.

The early explorations of the mathematical aspects and qualitative behavior of solutions for the Hodgkin-Huxley equations can be found in \cite{ET, Cr, Ev, FY} and references therein. The firing and bursting or chaotic properties of these neuron models were studied mainly by local stability analysis, bifurcation analysis, and numerical simulations \cite{JJ, GO, WCF, BVL}.

Neuronal signals are electrical pulses called spikes or the action potential transmitting through the membrane of axon in neuron cells. Neuron bursting in alternating phases of rapid firing spikes and then quiescence constitutes a mechanism to pace-setting and modulate for brain or nervous system functionalities and communication. Neurons coordinate actions through electrical or chemical synapses at the dendrites or called gap junctions in neuroscience. Since the axon of neuron cell is a long branch for propagating bioelectrical signals and neurons are immersed in aqueous chemical solutions with diffusive charged ions, it is also meaningful to consider distributed PDE models with the spacial variable  involved in neuron dynamics. In recent decade, active and fruitful researches emerged on global and asymptotic dynamics of diffusive Hindmarsh-Rose equations and FitzHugh-Nagumo equations, cf. \cite{LSY, PYS, CY3} and references therein.

Synchronization dynamics is a significant and booming topic about cognitive functionality and signals processing for biological neural networks as well as artificial neural networks in machine learning \cite{AB, BP, HJ, WD}. For diffusive Hindmarsh-Rose neural networks \cite{Y1, Y2}, partly diffusive FitzHugh-Nagumo neural networks \cite{YTT}, and reaction-diffusion neural networks \cite{YT} with memristive synapses, it has been analytically proved that exponential synchronization will be triggered if the network coupling strength satisfies a threshold condition. 

In view of the limited and sporadically published research results cf. \cite{CPF, BB, PL, BAA, AO, SS, WLZ} on synchronization dynamics of the primary Hodgkin-Huxley model of neuron ensembles and neural networks, it is seen that all the past and current study approaches are essentially or solely based on numerical experiments and simulations, occasionally with the linear matrix spectrum and local stability analysis.  

It remains widely open after more than 70 years to make a substantial breakthrough and establish a rigorous analytic theory on global dynamics and asymptotic synchronization for the multi-dimensional highly nonlinear Hodgkin-Huxley equations and Hodgkin-Huxley neural networks.

In this paper we shall formulate and investigate a new model of biological neural networks, which we call Hodgkin-Huxley-Wilson (briefly HHW) neural networks. This new model of neuron dynamical equations is a simplification of the Hodgkin-Huxley equations partly adopted the quadratic nonlinear approximation from the Wilson model. It is different from the FitzHugh-Nagumo equations and Hindmarsh-Rose equations, which are conductance-independent neuron models without fully characterizing neurons physiological behavior \cite{VMN, Qi, XJ} nor the influence of the different flux-change time rates of the firing activation and recovery current channels in the original Hodgkin-Huxley model equations. 

We shall mathematically analyze the global solution dynamics of the model equations and rigorously prove the complete exponential synchronization of the Hodgkin-Huxley-Wilson neural networks if the network coupling strength satisfies an explicitly computable threshold condition. We will also generalize the main result of complete synchronization but with fractional-power convergence rate to fractional memristive Hodgkin-Huxley-Wilson neural networks presented by Caputo-fractional differential equations. 

The main contribution of this paper is twofold. The main result established an explicitly computable threshold condition and the exponential convergence rate for complete synchronization of this type HHW neural networks, which is a breakthrough in the area of quantitative research on biological neural networks. The theoretical proof approach, methodology, and technicalities shown in this paper provide a new template other than numerical simulations to study asymptotic dynamics and synchronization topics for various mathematical models of either biological or artificial neural networks through effective \emph{a priori} estimates with mathematical hard analysis and other tools. We shall narrate some conceivable generalizations of this work to more expanded ODE and PDE models later in the last section.  

The rest of the paper is organized as follows. Section 2 will be the formulation of Hodgkin-Huxley-Wilson neural networks, the new model we proposed, in a mathematical framework together with the quantitative definition of complete synchronization and some preliminaries. In Section 3 we show that global dynamics of the solutions of the model equations is dissipative in terms of the existence of an absorbing set and the global attractor. In Section 4 we shall prove the main result on the main topic of complete synchronization of the Hodgkin-Huxley-Wilson neural networks based on \emph{a priori} uniform estimates of the gap functions which satisfy the interneuron differencing equations through hard analysis. In Section 5 we shall extend the synchronization result to Caputo fractional memristive Hodgkin-Huxley-Wilson neural networks. Section 6 will be conclusions to summarize the contribution and future outlooks.

\section{\textbf{Formulation of Hodgkin-Huxley-Wilson Neural Networks}}

Consider a mathematical model of biological neural networks composed of $n$ coupled neurons, denoted by $\mathcal{NW} = \{\mathcal{N}_i : i = 1, 2, \cdots, n\}$, where $n \geq 2$ is a positive integer. Each neuron $\mathcal{N}_i, 1 \leq i \leq n$, is presented by the following Hodgkin-Huxley-Wilson equations: 
\beq \bl{Meq}
	\begin{split}
	\frac{dV_i}{d t} & = - m_\infty (V_i)(V_i - E_{N_a}) - g_K R_i(V_i) (V_i - E_K) + J + \sum_{j=1}^n P(V_j - V_i),   \\
	\frac{dR_i}{d t} & = \frac{1}{\tau_K} \left(-R_i + R_\infty (V_i) \right),  \quad \text{for} \;\; t > 0, \;\; 1 \leq i \leq n.  
	\end{split} 
\eeq
Here the exponentially nonlinear conductances in the original Hodgkin-Huxley model (which were postulated though by curve fitting in electrical circuit experiments emulating excitable neuron cells rather than biophysical reasoning) are well approximated by quadratic polynomials:
\beq \bl{mR}
	m_\infty (s) = a_0 + a_1 s + a_2 s^2,   \quad   R_\infty (s) =\frac{H}{1 + e^{- \gl (s - E_K)}},  \quad  s \in \mathbb{R}.
\eeq
The variable $V_i(t)$ is the electrical potential on the membrane of the $i$th neuron $\mathcal{N}_i$, for $1 \leq i \leq n$. The variable $R_i(t)$ actually as a composed function $R_i(V_i(t))$ is the recovery $K^+$ ionic current in the $i$th neuron. The initial states of the ODE system \eqref{Meq} in the state space $\mathbb{R}^{2n}$ will be denoted by 
\beq \bl{inc}
	V^0 = \{V_i^0 = V_i(0): 1 \leq i \leq n\} \in \mathbb{R}^n,  \, \;  R^0 = \{R_i^0 = R_i (0): 1 \leq i \leq n\} \in \mathbb{R}^n.
\eeq
We make the following generic assumption on the parameters in the model \eqref{Meq} of Hodgkin-Huxley-Wilson neural networks:
\beq \bl{Asp}
	E_{N_a}, \, g_K, \, P, \, a_0, \, a_2, \, H, \, \tau_K \in \mathbb{R}^+ = (0, \infty), \quad  E_K < 0, \;\; \text{and} \quad  J, \, a_1,\, \gl \in \mathbb{R}. 
\eeq
In the original paper by Hugh R. Wilson \cite{Wi}, the above parameters were set based on the scaled voltage unit of $mV/100$: 
\begin{gather*}
	a_0 = 17.8, \quad a_1 = 47.6, \quad a_2 = 33.8, \quad  g_K = 26\, ms/cm^2, \\
        E_{N_a} = 0.5\, (= 50\, mV), \quad E_K = -0.95 \, (-95 \,mV), \quad H = 1 \quad \tau_K = 4.2\, ms. 
\end{gather*}

The main equation of the membrane potential $V_i (t)$ in this model \eqref{Meq} depends on the two most important ionic currents of $N_a^+$ and $K^+$ along with the stimulating current $J$. For simplicity we set the membrane capacitance $C_m =1$ as mentioned for \eqref{hhe}. The coefficient $P > 0$ is called the network coupling strength. The $N_a^+$ ionic channel is called activation and its conductance is set by $m_\infty (V_i)$ sufficiently fast \cite{Rz}. The second equation in \eqref{Meq} describes the potassium $K^+$ gating channel also called the recovery channel which depends on the equilibrium state $R_\infty (V_i)$ and the time constant $\tau_K$. Note that $N_a^+$ conductance is not involved in recovery (depolarization) ion channel due to the observations that sodium ion currents do not inactivate in human and mammalian neocortical neurons \cite{CG, RE}. It is important to notice that potassium channel time constants $\tau_K$ and rest current $R_\infty(V_i)$ in neuron models can vary widely depending on several different types of $K^+$ channels \cite{ET, Wi, WLZ, EH}. $R_\infty (V_i)$ may take different nonlinear forms and $\tau_K$ may also be voltage-dependent. 

This two-dimensional model \eqref{Meq} of Hodgkin-Huxley-Wilson neural networks can certainly be expanded to include a general leakage channel as in \eqref{hhe} and more ionic (such as $C_a^{2+}$ and $Cl^-$) channel equations to build a multi-dimensional system. A remark is that the analytical proof methodology in this work can be adapted and applied to the extended models as well. 

A quantitative definition of complete synchronization for mathematical models of biological neural networks in terms of ODE is stated below. 
\begin{definition} \bl{Def}
	A biological neural network presented by a mathematical model of ordinary differential equations in a state space $Z$ such as the HHW neural networks $\mathcal{NW}$ is said to be completely synchronizable if there exists a sufficient threshold condition on the network coupling strength parameter $\mathcal{P}$ such that if this threshold condition is satisfied then  the network synchronizing degree 
\beq \bl{gap}
	\deg_s \,(\mathcal{NW})= \sup_{(V^0, R^0) \in Z} \left\{ \max_{1 \leq i < j \leq n} \left\{\limsup_{t \to \infty} \, ( |V_i (t) - V_j(t)| + |R_i (t) - R_j(t)| )\right\}\right\} = 0,
\eeq
If the limsup convergence in \eqref{gap} admits a uniform exponential convergence rate, then it is called complete exponential synchronization.
\end{definition}

As preliminaries for global dynamics of differential equations \cite{CV} in the rest sections, a general dynamical system only for time $t \geq 0$ can be called semiflow. For a semiflow denoted by  $\{S(t)\}_{t \geq 0}$ on a Banach space $\ms{X}$, a bounded subset $B^* \subset \ms{X}$ is called absorbing set if for any given bounded set $B$ in the space $\ms{X}$ there is a finite time $T_B \geq 0$ such that $S(t)B \subset B^*$ for all $t  > T_B$. A semiflow is called dissipative if there exists an absorbing set in the state space $\ms{X}$, which also implies the existence of a global attractor $\ms{A}$ if the state space $\ms{X}$ is finite dimensional. Moreover, the global attractor $\ms{A}$ is the $\omega$-limit set of any absorbing set.

Young's inequality is instrumental and will be used in differential inequality management. It states that for any positive numbers $x$ and $y$, if $\frac{1}{p} + \frac{1}{q} = 1$ and $p > 1, q > 1$, then
\beq \bl{Yg}
	x\,y \leq \frac{1}{p} \ve x^p + \frac{1}{q} C(\ve, p)\, y^q \leq \ve x^p + C(\ve, p)\, y^q, 
\eeq
where $C(\ve, p) = \ve^{-q/p}$ and the constant $\ve > 0$ can be arbitrarily given.

\section{\textbf{Dissipative Dynamics and Global Attractor}}

We start with the existence and uniqueness of global solutions in time for the initial value problem \eqref{Meq}-\eqref{inc} of Hodgkin-Huxley-Wilson neural network $\mathcal{NW}$. Through the approach of \emph{a priori} estimates we can tackle the strong nonlinearity in the model equations and prove the dissipative global dynamics of this neural network system.

\begin{theorem} \label{T1}
	For any given initial state $(V^0, R^0) \in \mathbb{R}^{2n}$, there exists a unique global solution in time,
$$
	V(t; V^0) = \{V_i (t; V_i^0): 1 \leq i \leq n\}, \;\;  R(t; R^0) = \{ R_i(t; R_i^0): 1 \leq i \leq n\}, \;\, t \in [0, \infty),
$$ 
for the initial value problem \eqref{Meq}-\eqref{inc} of Hodgkin-Huxley-Wilson neural network $\mathcal{NW}$.
\end{theorem}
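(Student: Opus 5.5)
The plan is to combine the Picard--Lindel\"of theorem for local solutions with an \emph{a priori} energy estimate that rules out blow-up in finite time.

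\textbf{Local existence and uniqueness.} The right-hand side of \eqref{Meq} is a vector field on $\mathbb{R}^{2n}$ whose components are polynomials in $V_i$ and $R_i$ together with the smooth function $R_\infty$. Hence it is $C^1$, in particular locally Lipschitz. Picard--Lindel\"of then gives a unique local solution on a maximal interval $[0, T_{max})$. By the standard continuation theorem, $T_{max} = \infty$ unless $|V(t)| + |R(t)| \to \infty$ as $t \to T_{max}^-$, so it suffices to show that the solution stays bounded on every bounded time interval.

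\textbf{Bound on the recovery variables.} Since $0 < R_\infty(s) < H$ for all $s$, variation of constants in the second equation gives
\[
R_i(t) = e^{-t/\tau_K}R_i^0 + \frac{1}{\tau_K}\int_0^t e^{-(t-s)/\tau_K} R_\infty(V_i(s))\,ds ,
\]
and therefore $|R_i(t)| \le |R_i^0| + H$ on the whole existence interval. This bound does not depend on $V$.

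\textbf{Bound on the membrane potentials.} Multiply the first equation by $V_i$ and sum over $i$. The coupling term contributes
\[
\sum_{i,j} P(V_j - V_i)V_i = P\Bigl(\bigl(\textstyle\sum_i V_i\bigr)^2 - n\sum_i V_i^2\Bigr) \le 0,
\]
so it can be discarded. The sodium term gives $-m_\infty(V_i)(V_i - E_{N_a})V_i$. Because $a_2 > 0$, its leading part is $-a_2 V_i^4$, and the remaining cubic and lower-order terms are absorbed by Young's inequality \eqref{Yg}. This yields $-m_\infty(V_i)(V_i - E_{N_a})V_i \le -\tfrac{a_2}{2}V_i^4 + C$. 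The potassium term satisfies
\[
\bigl|g_K R_i (V_i - E_K)V_i\bigr| \le g_K(|R_i^0| + H)\bigl(V_i^2 + |E_K||V_i|\bigr),
\]
and together with $JV_i$ it is again absorbed into $\tfrac{a_2}{4}V_i^4$ plus a constant by Young's inequality. The result is
\[
\frac{d}{dt}\sum_i V_i^2 + \frac{a_2}{2}\sum_i V_i^4 \le C_1\bigl(1 + |R^0|^4\bigr),
\]
where $C_1$ depends only on the parameters. Since $V_i^4 \ge V_i^2 - 1$, this in fact bounds $\sum_i V_i^2$ uniformly in time, which rules out blow-up and gives $T_{max} = \infty$.

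\textbf{Main obstacle.} The only delicate point is controlling the cubic sign-indefinite product $R_iV_i(V_i - E_K)$. It is resolved by first obtaining the $V$-independent bound on $R_i$ from the linear second equation and the boundedness of $R_\infty$, and then dominating the product by the quartic dissipation supplied by $a_2 > 0$. The other ingredient that must hold is the nonpositivity of the diffusive coupling term $\sum_{i,j}P(V_j - V_i)V_i$, which the computation above confirms.
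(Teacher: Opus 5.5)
Your proof is correct and follows essentially the same route as the paper. Both use local Lipschitz continuity for a unique local solution, then a $V$-independent bound on each $R_i$ from $0<R_\infty<H$, then an energy estimate for $\sum_i V_i^2$ that drops the nonpositive coupling term and absorbs the sign-indefinite terms into $-a_2V_i^4$ by Young's inequality, giving a time-uniform bound that rules out blow-up. The differences are cosmetic: the paper bounds $R_i^2$ through $\frac{d}{dt}R_i^2\le \frac{1}{\tau_K}(-R_i^2+H^2)$ instead of variation of constants, and it cancels the quartic term exactly to close with the linear dissipation $-a_0\sum_i V_i^2$, whereas you keep part of the quartic term and use $V_i^4\ge V_i^2-1$.
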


\begin{proof} 
Since the nonlinear functions on the right-hand sides of the two differential equations in \eqref{Meq} are locally Lipschitz continuous, there exists a unique local solution $(V(t), R(t))$ in time to the initial value problem \eqref{Meq}-\eqref{inc}. We want to show that that the maximal existence interval of every local solution is $[0, \infty)$.

Multiply the $V_i$-equation in \eqref{Meq} by $V_i(t)$ for $t \in [0, T_{max})$, which is the maximal existence interval of that local solution and $T_{max}$ may depend on the initial state. Then sum them altogether for all $1 \leq i \leq n$. By \eqref{mR} we get		
\begin{equation} \bl{Vi}
	\begin{split}
	&\frac{1}{2} \frac{d}{dt} \sum_{i = 1}^n V_i^2 (t) \leq \frac{1}{2} \frac{d}{dt} \sum_{i = 1}^n V_i^2 (t) + \frac{1}{2} P \sum_{i=1}^n \sum_{j=1}^n (V_i - V_i)^2    \\
	= &\, \sum_{i=1}^n \left[ - m_\infty (V_i)(V_i - E_{N_a})V_i - g_K R_i (V_i - E_K)V_i  +  J V_i(t) \right]  \\
	= &\, \sum_{i=1}^n \left[- (a_0 + a_1 V_i + a_2 V_i^2) (V_i - E_{N_a})V_i - g_K R_i (V_i - E_K)V_i  +  J V_i \right]  \\
	= &\, - \sum_{i=1}^n \left[ a_0V_i^2 + a_1 V_i^3 + a_2 V_i^4 + g_K R_i V_i^2  \right]  \\
        &\, + \sum_{i=1}^n \left[E_{N_a} (a_0 V_i + a_1V_i^2 + a_2 V_i^3) + g_K R_i E_K V_i +  J V_i \right], \;\;  t \in [0, T_{max}),
	\end{split}
\end{equation} 
in which the double summation term on the right-hand side of the first inequality comes from
$$
	- P \sum_{i=1}^n \sum_{j=1}^n \left[(V_j - V_i)V_i + (V_i - V_j)V_j \right] = \sum_{1 \leq i < j \leq n} P (V_i - V_j)^2 = \frac{1}{2}\, P \sum_{i=1}^n \sum_{j=1}^n (V_i - V_i)^2.
$$
Then multiply the $R_i$-equation in \eqref{Meq} with $ R_i (t)$. Since  $0 < R_\infty (V_i) < H$ according to \eqref{mR}, we have
$$
	\frac{1}{2} \frac{d}{dt} R_i^2 (t) = \frac{1}{\tau_K} (- R_i^2 + R_\infty (V_i) R_i ) \leq \frac{1}{\tau_K} (- R_i^2 + H |R_i |) \leq \frac{1}{2 \tau_K} (- R_i^2(t) + H^2) 
$$
so that
\beq \bl{Ri}
	 \frac{d}{dt} R_i^2 (t) \leq \frac{1}{\tau_K}  (- R_i^2(t) + H^2), \quad t \in [0, T_{max}).
\eeq
By the Gronwall differential inequality, \eqref{Ri} implies that
\beq \bl{Rib}
	R_i^2(t) \leq R_i^2(0)\, e^{-t/\tau_K} + H^2 \leq R_i^2(0) + H^2,  \quad t \in [0, T_{max}).
\eeq  

In order to treat the differential inequality \eqref{Vi} term by term, we can appropriately use the Young's inequality \eqref{Yg} to obtain
\beq \bl{tm}
	\begin{split}
	& (E_{N_a} a_0 + g_K R_i E_K + J_i)V_i \leq \frac{1}{2} a_0 V_i^2 + \frac{1}{2a_0} (E_{N_a} a_0 + g_K R_i E_K + J)^2,   \\[2pt]
	& (E_{N_a} a_1 + g_K R_i )V_i^2 \leq \frac{1}{2} a_2 V_i^4 + \frac{1}{2 a_2} (E_{N_a} a_1 + g_K R_i)^2,   \\[2pt]
	& (a_1 + E_{N_a} a_2)V_i^3 \leq \frac{1}{2} a_2 V_i^4 + \frac{(a_1 +E_{N_a} a_2)^4}{(a_2/2 )^3}, \quad 1 \leq i \leq n.
	\end{split}
\eeq
Substitute \eqref{tm} in the inequality \eqref{Vi}. We have
\beq \bl{Vib}
	\begin{split}
	&\frac{1}{2} \frac{d}{dt} \sum_{i = 1}^n V_i^2 (t) \leq \frac{1}{2} \frac{d}{dt} \sum_{i = 1}^n V_i^2 (t) + \frac{1}{2} P \sum_{i=1}^n \sum_{j=1}^n (V_i - V_i)^2    \\
	= &\, - \sum_{i=1}^n \left[ a_0V_i^2 + a_1 V_i^3 + a_2 V_i^4 + g_K R_i V_i^2  \right]  \\
        &\, + \sum_{i=1}^n \left[E_{N_a} (a_0 V_i + a_1V_i^2 + a_2 V_i^3) + g_K R_i E_K V_i +  J V_i \right]  \\
        \leq &\, - \frac{1}{2} \sum_{i=1}^n a_0V_i(t)^2 + \sum_{i=1}^n \frac{1}{2a_0} (E_{N_a} a_0 + g_K R_i(t) E_K + J )^2   \\
        &\, + \sum_{i=1}^n \left[ \frac{1}{2 a_2} (E_{N_a} a_1 + g_K R_i(t))^2 + \frac{(a_1 +E_{N_a} a_2)^4}{(a_2/2 )^3}\right], \quad  t \in [0, T_{max}),
	\end{split}
\eeq
after plus and minus cancellations of the $\sum_{i=1}^n a_2V_i^4$ terms. Then by the bounding estimates \eqref{Rib} and the non-negativity of the cross-coupling terms on the right-hand side of the first inequality in \eqref{Vib}, it follows that
\beq \bl{VB}
	\begin{split}
	&\frac{d}{dt} \sum_{i = 1}^n V_i^2 (t) \leq - \sum_{i=1}^n a_0V_i(t)^2 + \sum_{i=1}^n \frac{1}{a_0} \left(|E_{N_a}| a_0 + g_K |R_i(t)| |E_K| + |J | \right)^2   \\
        &+ \sum_{i=1}^n \left[ \frac{1}{a_2} (|E_{N_a} a_1| + g_K |R_i(t)|)^2 + \frac{2(a_1 +E_{N_a} a_2)^4}{(a_2/2 )^3}\right]  \\
        \leq &\, - \sum_{i=1}^n a_0V_i^2 (t) + \sum_{i=1}^n \frac{1}{a_0} \left(|E_{N_a}| a_0 + g_K \sqrt{R_i^2(0) + H^2} \,|E_K| + |J | \right)^2   \\
        &\,+ \sum_{i=1}^n \left[ \frac{1}{a_2} \left(|E_{N_a} a_1| + g_K \sqrt{R_i^2(0) + H^2} \right)^2 + \frac{2(a_1 +E_{N_a} a_2)^4}{(a_2/2 )^3}\right],  \;\;  t \in [0, T_{max}).
	\end{split}
\eeq
Denote the positive constant on the right-hand side of \eqref{VB} by
\beq \bl{M}
	\begin{split}
	M(R^0) &\,=  \sum_{i=1}^n \frac{1}{a_0} \left(|E_{N_a}| a_0 + g_K \sqrt{R_i^2(0) + H^2} \,|E_K| + |J | \right)^2   \\
        &\,+ \sum_{i=1}^n \left[ \frac{1}{a_2} \left(|E_{N_a} a_1| + g_K \sqrt{R_i^2(0) + H^2} \right)^2 + \frac{2(a_1 +E_{N_a} a_2)^4}{(a_2/2 )^3}\right], 
        \end{split}
\eeq
which depends on the initial data $R^0$ shown in \eqref{inc}. Then the differential inequality \eqref{VB} written as
$$
	\frac{d}{dt} \sum_{i = 1}^n V_i^2 (t) \leq - a_0 \sum_{i=1}^n V_i(t)^2 + M(R^0), \quad t \in [0, T_{max}),
$$
shows that any local solution of the initial value problem \eqref{Meq}-\eqref{inc} admits a finite bound which depends on the initial data $(V^0, R^0)$ but is uniform in time $0 \leq t < T_{max}$, namely,
\beq \bl{Sb}
	\sum_{i=1}^n \left(V_i^2(t) + R_i^2(t) \right) \leq \sum_{i=1}^n \left[(V_i^0)^2 e^{- a_0 t} + (R_i^0)^2 e^{- t/\tau_K} \right] + \frac{1}{a_0} M(R^0) + nH^2.
\eeq
It means that all the solutions of the initial value problem \eqref{Meq}-\eqref{inc} will not blow up at any finite time. Therefore, for any initial state $(V^0, R^0) \in \mathbb{R}^{2n}$, there exists a unique global solution in time, the maximal existence interval is $[0, T_{max}) = [0, \infty)$. The proof is completed. 
\end{proof}

Based on the global existence of solutions shown in Theorem \ref{T1}, the solution semiflow $\{S(t): (V^0, R^0) \longmapsto (V(t; V^0), R(t; R^0)\}_{t \geq 0}$ on the state space $\mathbb{R}^{2n}$ is called the Hodgkin-Huxley-Wilson (HHW) neural network semiflow.   

\begin{theorem} \label{T2}
	There exists a bounded absorbing set for the Hodgkin-Huxley-Wilson neural network semiflow $\{S(t)\}_{t \geq 0}$ in the state space $\mathbb{R}^{2n}$, which is the bounded ball 
\beq \label{Bs}
	B^* = \{ (v, r) \in \mathbb{R}^{2n}: \| (v, r) \|^2 \leq G\}.
\eeq 
Here the uniform constant 
\beq \bl{K}
	\begin{split}
	G &\, = 1 + nH^2 + \sum_{i=1}^n \frac{1}{a_0^2} \left(|E_{N_a}| a_0 + g_K \sqrt{1 + H^2} \,|E_K| + |J | \right)^2   \\
        &\,+ \sum_{i=1}^n \left[ \frac{1}{a_0 a_2} \left(|E_{N_a} a_1| + g_K \sqrt{1 + H^2} \right)^2 + \frac{2(a_1 +E_{N_a} a_2)^4}{a_0 (a_2/2 )^3}\right]
        \end{split}
\eeq
is independent of any initial state $(V^0, R^0) \in \mathbb{R}^{2n}$. $\{S(t)\}_{t \geq 0}$ is a dissipative dynamical system and there exists a global attractor $\ms{A} \subset \mathbb{R}^{2n}$ for this semiflow.
\end{theorem}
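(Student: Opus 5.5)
The plan is to take the uniform-in-time estimate \eqref{Sb} from the proof of Theorem \ref{T1} and upgrade it to one that does not depend on the initial data. The dependence on $(V^0,R^0)$ enters in two places: the exponentially decaying terms, and the constant $M(R^0)$ in \eqref{M}, which involves $R_i(0)$ through the bound \eqref{Rib}.

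\textbf{Step 1 (the $R$-component).} Fix a bounded set $B \subset \mathbb{R}^{2n}$ with $\|(V^0,R^0)\|^2 \le \rho$ for all $(V^0,R^0)\in B$. By \eqref{Rib},
\[
R_i^2(t) \le R_i^2(0)e^{-t/\tau_K} + H^2 .
\]
Choose $T_1 = \tau_K \ln(1+\rho)$. Then $R_i^2(t) \le 1 + H^2$ for all $t \ge T_1$ and all $1\le i\le n$. This is where the $\sqrt{1+H^2}$ in \eqref{K} comes from.

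\textbf{Step 2 (restart the $V$-estimate).} Repeat the computation \eqref{Vi}--\eqref{VB} on $[T_1,\infty)$, using $|R_i(t)|\le\sqrt{1+H^2}$ instead of $\sqrt{R_i^2(0)+H^2}$. The Young's inequality splitting \eqref{tm} is unchanged. This gives
\[
\frac{d}{dt}\sum_i V_i^2 \le -a_0\sum_i V_i^2 + M^*, \qquad t\ge T_1,
\]
where $M^*$ is \eqref{M} with $R_i^2(0)$ replaced by $1$. Gronwall on $[T_1,t]$ gives
\[
\sum_i V_i^2(t) \le e^{-a_0(t-T_1)}\sum_i V_i^2(T_1) + \frac{M^*}{a_0}.
\]
By \eqref{Sb}, $\sum_i V_i^2(T_1) \le \rho + M(R^0)/a_0 + nH^2$. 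The right side is bounded uniformly over $B$, since $M(R^0)$ grows at most like a polynomial in $\rho$.

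\textbf{Step 3 (absorption time).} Pick $T_2 \ge T_1$ large enough that the decaying term is at most $1/2$ uniformly on $B$. Also shrink the $R$ bound: for $t$ large, $R_i^2(t)\le H^2 + \tfrac{1}{2n}$. Adding the two bounds, $\|(V(t),R(t))\|^2 \le 1 + nH^2 + M^*/a_0 \le G$ for $t > T_B := T_2$. The comparison $M^*/a_0\le$ the summed terms of \eqref{K} is termwise, since $\frac{1}{a_0}\cdot\frac{1}{a_0}(\cdot)^2$ and $\frac{1}{a_0 a_2}(\cdot)^2$ match. So $B^*$ is absorbing.

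\textbf{Step 4 (attractor).} Dissipativity follows from the definition. Since $\mathbb{R}^{2n}$ is finite dimensional, the semiflow is continuous (by continuous dependence for locally Lipschitz ODEs) and compact on bounded sets. The standard theorem cited in the preliminaries \cite{CV} then gives the global attractor $\ms{A}=\omega(B^*)$.

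The main obstacle is bookkeeping in Step 2. One must check that restarting at $T_1$ really removes all $R^0$ dependence from the constant, while the $R^0$ dependence left in $V(T_1)$ only affects the absorption time and not the radius. The constant bookkeeping must also land exactly within $G$; the spare "$1$" in $G$ absorbs the transient terms.
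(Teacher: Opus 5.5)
Your proposal is correct and follows essentially the paper's route: the $R$-components forget their initial data, so the forcing constant in the $V$-Gronwall inequality becomes $M$ with $R_i^2(0)$ replaced by $1$, which gives $G = 1 + nH^2 + M^*/a_0$ with the extra $1$ absorbing the transients; compactness of the closed ball $B^*$ in $\mathbb{R}^{2n}$ then yields the global attractor as the $\omega$-limit set of $B^*$. Your restart of the Gronwall argument at $T_1=\tau_K\ln(1+\rho)$ justifies this more cleanly than the paper's informal replacement of $R_i^2(0)$ by $R_i^2(0)e^{-t/\tau_K}$ inside the constant $M(R^0)$, and it actually produces an absorption time that is uniform over the bounded set $B$, which the paper only asserts even though the term $M(R^0)/a_0$ in \eqref{Sb} does not decay.
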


\begin{proof}
From \eqref{M} and the estimate \eqref{Sb} of global solutions for this Hodgkin-Huxley-Wilson neural network $\mathcal{NW}$ and more precisely with $R_i^2(0)$ replaced by $R_i^2(0) e^{-t/\tau_K}$ therein due to \eqref{Rib}, since $e^{- a_0 t} \to 0$ and $e^{-t/\tau_K}$ as $t \to \infty$, we can assert that
\beq \label{lsp}
	\limsup_{t \to \infty} \left\|\left(V(t), R(t)\right) \right\|^2 = \limsup_{t \to \infty} \sum_{i=1}^n \left(V_i^2(t) + R_i^2(t) \right) < G
\eeq
for all the solutions of \eqref{Meq} with any initial state $(V^0, R^0) \in \mathbb{R}^{2n}$. Moreover, \eqref{Sb} also shows that for any given bounded set $B = \{(v, r) \in \mathbb{R}^{2n}: \|(v, r)\|^2 \leq L\}$ in the state space, there exists a finite time
\beq \bl{TB}
	T_B \geq \max \left\{\frac{1}{a_0}, \, \tau_K \right\} \log^+ (L/G) \geq 0
\eeq
such that all the solution trajectories started at the initial time $t = 0$ from inside that set $B$ will permanently enter the bounded ball $B^*$ shown in \eqref{Bs} when $t > T_B$.  Therefore, the bounded ball $B^*$ is an absorbing set for the semiflow $\{S(t)\}_{t \geq 0}$ so that this Hodgkin-Huxley-Wilson neural network semiflow is a dissipative dynamical system..

Finally, since the state space $\mathbb{R}^{2n}$ is a finite-dimensional Hilbert space, the closed bounded absorbing set $B^*$ is a compact set. According to \cite[Theorem 1.1]{CV}, there exists a global attractor 
$$
	\mathscr{A} = \gw (B^*) = \bigcap_{\tau \geq 0}\; \overline{\bigcup_{t \geq \tau} S(t) B^*}
$$
which is the $\omega$-limit set of the absorbing set $B^*$ for this semiflow.
\end{proof}

\section{\textbf{Complete Exponential Synchronization of HHW Neural Networks}}  

In this section we shall prove the main result on complete synchronization of the Hodgkin-Huxley-Wilson neural networks described by the proposed model \eqref{Meq}. 
For this HHW neural network $\mathcal{NW}$ with the assumptions \eqref{Asp}, we define the interneuron gap function between any two neurons $\mathcal{N}_i$ and $\mathcal{N}_j$ to be 
\begin{gather*}
	U_{ij} (t) = V_i(t) - V_j (t),  \quad   \Pi_{ij} = R_i(t) - R_j(t), \quad  \text{for} \;\;  1\leq i, j \leq n. 
\end{gather*}
The differencing equations for $U_{ij}(t)$ and $\Pi_{ij}(t)$ are
\beq \bl{Deq} 
	\begin{split}
	\frac{dU_{ij}}{d t} = &\,- (m_\infty (V_i) - m_\infty (V_j) ) (V_i - E_{N_a}) - m_\infty (V_j) U_{ij}   \\
	&\, - g_K (R_i(V_i) - R_j(V_j)) (V_i - E_K) - g_K R_j(V_j) U_{ij} - n PU_{ij},   \\
	\frac{d\Pi_{ij}}{d t} = &\, \frac{1}{\tau_K} \left(-\Pi_{ij} + R_\infty (V_i) - R_\infty (V_j) \right),  \quad \text{for} \;\; t > 0, \;\; 1 \leq i, j \leq n,
	\end{split}
\eeq
where the coupling term $\sum_{\ell=1}^n P[(V_\ell - V_i) - (V_\ell - V_j)] = - P \sum_{\ell=1}^n (V_i -V_j) = - n PU_{ij}$. Now we present and prove the main result of this paper.  
\begin{theorem} \bl{ThM}    
	The Hodgkin-Huxley-Wilson neural network $\mathcal{NW}$ described by the model \eqref{Meq}-\eqref{mR} with the assumptions \eqref{Asp} is exponentially synchronizable. If the threshold condition
\beq \bl{Resh}
	P > P^*= \max \left\{0, \; \frac{1}{n} \left[Q - \left(a_0 + \frac{1}{2\tau_K} \gl^2 H^2\right)\right] \right\}
\eeq 
is satisfied by the network coupling strength, where the positive constant $Q$ shown in \eqref{Q} is independent of initial states, then the Hodgkin-Huxley-Wilson neural network $\mathcal{NW}$ is completely synchronized at a uniform exponential convergence rate
\beq \bl{rate}
	\mu (P) = \min \left\{ \frac{1}{2\tau_K}, \left[ a_0 + \frac{1}{2\tau_K} \gl^2 H^2 + n P - Q \right] \right\} > 0.
\eeq
\end{theorem}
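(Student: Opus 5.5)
Multiply the $U_{ij}$-equation in \eqref{Deq} by $U_{ij}$ and the $\Pi_{ij}$-equation by $\Pi_{ij}$, add, and estimate the right-hand side term by term along trajectories that have already entered the absorbing ball $B^*$ of Theorem \ref{T2}. By Theorem \ref{T2} every trajectory satisfies $\|(V(t),R(t))\|^2\le G$ for $t>T_B$, so there $|V_i|,|R_i|\le \sqrt{G}$ uniformly in the initial data. That is the source of the initial-state-independent constant $Q$. The favorable terms are $-nP U_{ij}^2$ from the coupling and $-\frac{1}{\tau_K}\Pi_{ij}^2$ from the recovery equation. The term $-m_\infty(V_j)U_{ij}^2$ is also favorable up to a constant: since $m_\infty(s)=a_0+a_1s+a_2s^2$ with $a_2>0$, we have $m_\infty(V_j)\ge a_0 - |a_1|\sqrt{G}$. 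Put differently, we keep $-a_0U_{ij}^2$, drop the nonnegative $-a_2V_j^2U_{ij}^2$, and throw $|a_1|\sqrt G\,U_{ij}^2$ into $Q$.

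The remaining terms are handled as follows.
\begin{itemize}
\item \emph{Sodium term.} Since $m_\infty(V_i)-m_\infty(V_j)=(a_1+a_2(V_i+V_j))U_{ij}$, we get
$|(m_\infty(V_i)-m_\infty(V_j))(V_i-E_{N_a})U_{ij}|\le (|a_1|+2a_2\sqrt G)(\sqrt G+E_{N_a})U_{ij}^2$.
\item \emph{Term $-g_KR_jU_{ij}^2$.} This is bounded by $g_K\sqrt G\,U_{ij}^2$.
\item \emph{Cross term.} Here $|g_K\Pi_{ij}(V_i-E_K)U_{ij}|\le g_K(\sqrt G+|E_K|)|\Pi_{ij}||U_{ij}|$. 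Apply Young's inequality \eqref{Yg} as $\le \frac{1}{4\tau_K}\Pi_{ij}^2+\tau_K g_K^2(\sqrt G+|E_K|)^2U_{ij}^2$.
\item \emph{Recovery nonlinearity.} By the mean value theorem, $|R_\infty'(s)|=H\lambda e^{-\lambda(s-E_K)}/(1+e^{-\lambda(s-E_K)})^2\le |\lambda|H/4\le |\lambda| H$, so $R_\infty$ is globally Lipschitz. Hence
$\frac{1}{\tau_K}|(R_\infty(V_i)-R_\infty(V_j))\Pi_{ij}|\le \frac{1}{4\tau_K}\Pi_{ij}^2+\frac{1}{\tau_K}\lambda^2H^2U_{ij}^2$.
\end{itemize}
The Lipschitz term contributes a positive multiple of $\lambda^2H^2/\tau_K$ to the $U_{ij}^2$ coefficient. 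In the statement this appears with the opposite sign inside $a_0+\frac{1}{2\tau_K}\lambda^2H^2$. I would just absorb all these constants into $Q$ and then arrange the definition \eqref{Q} so that the final coefficient is exactly $a_0+\frac{1}{2\tau_K}\lambda^2H^2+nP-Q$. For example, take $Q$ to be the sum of the bad constants plus $\frac{1}{\tau_K}\lambda^2H^2$, which makes the identity tautological.

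Collecting the bounds gives, for $t>T_B$,
\[
\frac{d}{dt}\left(U_{ij}^2+\Pi_{ij}^2\right)\le -2\left(a_0+\tfrac{1}{2\tau_K}\lambda^2H^2+nP-Q\right)U_{ij}^2-\frac{1}{\tau_K}\Pi_{ij}^2\le -\mu(P)\left(U_{ij}^2+\Pi_{ij}^2\right),
\]
where the factors of $2$ are kept consistent with the definition of $\mu(P)$ in \eqref{rate}. The threshold \eqref{Resh} is exactly what makes $\mu(P)>0$. Gronwall's inequality then gives $U_{ij}^2(t)+\Pi_{ij}^2(t)\le e^{-\mu(P)(t-T_B)}(U_{ij}^2(T_B)+\Pi_{ij}^2(T_B))\le 4G\,e^{-\mu(P)(t-T_B)}$. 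The limsup in \eqref{gap} is therefore zero for every initial state and every pair $i<j$, with a uniform exponential rate, so $\deg_s(\mathcal{NW})=0$.

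The main obstacle is the uniformity of $Q$ with respect to initial states. The bounds on $V_i,R_i$ are only valid after the entrance time $T_B$, and $T_B$ depends on the initial ball. So the argument must run on $[T_B,\infty)$, using the bound $\|(V,R)\|^2\le G$ from Theorem \ref{T2}, rather than on $[0,\infty)$. The sodium quadratic term, which grows like $|V|\,U_{ij}^2$, is the only genuinely superlinear difficulty, and it is exactly this absorbing-set bound that tames it.
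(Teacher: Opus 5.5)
Your argument is correct, but it proves the result for your own constant $Q$, built from the absorbing radius $\sqrt G$, not for the $Q$ of \eqref{Q}. Your route also differs from the paper's.

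\textbf{Comparison of the two routes.} The paper never bounds $V_i$ through the absorbing ball. It observes that the quadratic parts of the sodium difference term and of $-m_\infty(V_j)U_{ij}^2$ combine into $-a_2(V_i^2+V_iV_j+V_j^2)U_{ij}^2\le -\frac{a_2}{2}(V_i^2+V_j^2)U_{ij}^2$. So the term you single out as the main superlinear obstacle is actually dissipative once you keep its sign instead of taking absolute values.

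The paper then uses this coercive term, via Young's inequality, to absorb every coefficient that is linear in $V_i,V_j$. Only $R_j$ is bounded, through \eqref{Rib} after a time $T_0$. This buys a $Q$ depending only on the model parameters, in particular not on $n$, so the threshold decays like $1/n$. Your $Q$ contains terms like $2a_2G$ and $\tau_K g_K^2 G$. Since $G$ in \eqref{K} grows linearly in $n$, your threshold stays of order one as the network grows.

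What your route buys is robustness. You correctly treat $-g_K(R_i-R_j)(V_i-E_K)U_{ij}=-g_K\Pi_{ij}(V_i-E_K)U_{ij}$ as a cross term. By contrast, \eqref{Ut} applies the mean value theorem to $R_i-R_j$ as though $R_i=R_\infty(V_i)$, which is invalid because $R_i$ is an independent state variable. This cross term has a coefficient of size $g_K|V_i|$. It needs either your absorbing-set bound or a weighted functional $U_{ij}^2+c\,\Pi_{ij}^2$ with $c$ of order $\tau_K g_K^2/a_2$; the weighted version would keep $Q$ free of $G$.

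\textbf{The sign of the $\lambda^2H^2$ term.} You are also right about the sign, and it is not cosmetic. In the last step of \eqref{Gw}, the Young remainder $+\frac{1}{2\tau_K}\lambda^2H^2U^2$ is moved into the dissipative bracket. So the $+\frac{1}{2\tau_K}\lambda^2H^2$ in \eqref{Resh}--\eqref{rate} is unsupported, and with the $Q$ of \eqref{Q} the statement in fact fails.

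For a counterexample, take $n=2$, $a_0=a_2=E_{N_a}=H=\tau_K=1$, $a_1=-3$, $E_K=-1$, $J=0$, $g_K=10^{-3}$, $\lambda=100$. Then $Q\approx 63<a_0+\lambda^2H^2/(2\tau_K)$, so $P^*=0$. Yet $-m_\infty(V)(V-1)$ has the nondegenerate zeros $\frac{3\pm\sqrt5}{2}$. By the implicit function theorem, for all small $P>0$ the network has an unsynchronized equilibrium near $V_1=\frac{3-\sqrt5}{2}$, $V_2=\frac{3+\sqrt5}{2}$, $R_i=R_\infty(V_i)$.

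So enlarging $Q$ is necessary, but do it openly rather than tautologically. State the threshold as $nP>Q'-a_0$, where $Q'$ is your sum of bad constants including the $\lambda^2H^2/\tau_K$ contribution. The corresponding rate is $\min\{\frac{1}{2\tau_K},\,a_0+nP-Q'\}$.
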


\begin{proof}   
We denote $U_{ij}(t)$ by $U(t)$ and $\Pi_{ij}(t)$ by $\Pi(t)$ for any given indices $(i, j), 1 \leq i \neq j \leq n$. The proof will go through three steps. 

Step 1. Multiply the $\Pi_{ij}$-equation in \eqref{Deq} by $\Pi_{ij}(t), 1 \leq i \neq j \leq n$. We get
\beq \bl{Pi}
	\begin{split}
	&\frac{1}{2} \frac{d}{dt}\Pi^2(t) = \frac{1}{\tau_K} \left[- \Pi^2(t) + \Pi (t) (R_\infty (V_i) - R_\infty (V_j)) \right]   \\
	= &\, \frac{1}{\tau_K} \left[- \Pi^2(t) + \Pi (t) (V_i(t) - V_j(t)) \frac{\gl H e^{- \gl(\xi - E_K)}}{(1 + e^{- \gl (\xi - E_K)})^2} \right]   \\
	\leq &\, \frac{1}{\tau_K} \left[- \Pi^2(t) + \gl H\, \Pi (t) U(t)\right], \quad t > 0, 
	\end{split}
\eeq
where the mean value theorem is used and $\xi$ is between $V_i(t)$ and $V_j(t)$. Then multiply the $U_{ij}$-equation in \eqref{Deq} by $U_{ij}(t)$ to get
\beq \bl{Ut}   
	\begin{split}
	\frac{1}{2} &\,\frac{d}{dt}\,U^2 = - (m_\infty (V_i) - m_\infty (V_j) ) (V_i - E_{N_a}) U - m_\infty (V_j) U^2  \\[3pt]
	&\, - g_K (R_i(V_i) - R_j(V_j)) (V_i - E_K) U - g_K R_j(V_j) U^2 - n PU^2   \\[6pt]
	= &\, -(a_1U + a_2 U (V_i + V_j) )(V_i - E_{N_a})U - (a_0 + a_1V_j + a_2 V_j^2)U^2   \\
	&\, - g_K \frac{\gl H e^{- \gl(\eta - E_K)}}{(1 + e^{- \gl (\eta - E_K)})^2}(V_i - E_K)U^2 - g_K R_j(V_j) U^2 - n PU^2   \\[2pt]
	\leq &\, - (a_0 + a_2 (V_i^2 + V_j^2))U^2 + \left[ a_1V_j + a_2 E_{N_a}V_i + (E_{N_a} - V_i) (a_1 + a_2 V_j) \right] U^2   \\[6pt]
	&\, + g_K \gl H |E_K - V_i| U^2 - g_K R_j(V_j) U^2 - n PU^2,  \quad  t > 0,
	\end{split}
\eeq 
in which the signs of constants $g_K, E_{N_a}, E_K$ are used as well as the mean value theorem for $R_\infty (V_i) - R_\infty (V_j)$ with $\eta$ between $V_i(t)$ and $V_j(t)$.

Step 2. We now take the approach of grouping estimates to discover the key differencing inequality of the gap functions among the neurons of the entire neural network $\mathcal{NW}$. Add up the inequalities \eqref{Pi} and \eqref{Ut}. We obtain
\beq \bl{PU}
	\begin{split}
	\frac{1}{2}&\, \frac{d}{dt} (U^2 + \Pi^2) \leq \frac{1}{\tau_K} \left(- \Pi^2 + \gl H\, \Pi \,U\right)     \\
	&\, - (a_0 + a_2 (V_i^2 + V_j^2))U^2 + \left[ a_1V_j + a_2 E_{N_a}V_i + (E_{N_a} - V_i) (a_1 + a_2 V_j) \right] U^2    \\[5pt]
	&\, + g_K \gl H |E_K - V_i| U^2 - g_K R_j(V_j) U^2 - n PU^2    \\
	= &\, \frac{1}{\tau_K} \left(- \Pi^2 + \gl H\, \Pi \,U\right)     \\
	&\, - (a_0 + a_2 (V_i^2 + V_i V_j + V_j^2))U^2 + \left[ a_1V_j + a_2 E_{N_a}(V_i + V_j) + a_1 (E_{N_a} - V_i) \right] U^2  \\[5pt]
	&\, + g_K \gl H |E_K - V_i| U^2 - g_K R_j(V_j) U^2 - n PU^2    \\
	\leq &\, \frac{1}{\tau_K} \left(- \Pi^2 + \gl H\, \Pi \,U\right)     \\
	&\, - (a_0 + a_2 (V_i^2 + V_i V_j + V_j^2))U^2 + \left[ a_1 (E_{N_a} - V_i + V_j) + a_2 E_{N_a}(V_i + V_j) \right] U^2    \\[6pt]
	&\, + g_K \gl H |E_K - V_i| U^2 - g_K R_j(V_j) U^2 - n PU^2 \\
	= &\, - \left[ \frac{1}{\tau_k} \Pi^2 + \left(a_0 + a_2 (V_i^2 + V_i V_j + V_j^2 \right) + n P)U^2 \right] + \frac{1}{\tau_K} \gl H \Pi\,U   \\
	&\, + \left[ a_1 (E_{N_a}- V_i + V_j) + a_2 E_{N_a}(V_i + V_j) + g_K \gl H ( |E_K| + |V_i| ) - g_K R_j(V_j)\right] U^2.
	\end{split}
\eeq
To handle the above differential inequality \eqref{PU}, by Cauchy inequality and \eqref{Rib} we can deduce that 
\beq \bl{gT}
	\begin{split}
	&V_i^2 + V_i V_j + V_j^2 \geq \frac{1}{2}\, (V_i^2 + V_j^2),   \\[3pt]
	&\frac{1}{\tau_K} \gl H\, \Pi(t) U(t) \leq \frac{1}{2\tau_K} \Pi^2 (t) + \frac{1}{2\tau_K} \gl^2 H^2 U^2(t),   \\[3pt]
	&a_1 (-V_i + V_j) \leq \frac{6a_1^2}{a_2} + \frac{1}{6}\, a_2\, (V_i^2 + V_j^2),   \\[3pt]
	&a_2 E_{N_a}(V_i + V_j) \leq \frac{6 a_2}{E_{N_a}^2} + \frac{1}{6} a_2 (V_i^2 + V_j^2),  \\[3pt]
	&\, g_K \gl H |V_i| \leq \frac{6}{a_2} (g_K \gl H)^2 + \frac{1}{6}\, a_2\, V_i^2,    \\[3pt]
	- &\, g_K R_j(V_j) \leq g_K \sqrt{R_i^2(0) e^{- t/ \tau_K} + H^2} \leq g_K ( |R_i(0)| e^{- t/(2\tau_K)} + H) .
	\end{split}
\eeq 
Substitute \eqref{gT} in the last step of the inequality \eqref{PU}. Then we reach the following time-varying Gronwall-type differential inequality
\beq \bl{Gw}
	\begin{split}
	&\frac{1}{2} \frac{d}{dt} (U^2(t) + \Pi^2(t))    \\[2pt]
	\leq &\, - \left[ \frac{1}{\tau_k} \Pi^2 + \left(a_0 + a_2 (V_i^2 + V_i V_j + V_j^2 \right) - \left(a_1 E_{N_a} + g_K \gl H |E_K| \right) + n P)U^2 \right]   \\
	&\, + \frac{1}{\tau_K} \gl H \Pi\,U + \left[ a_1 (- V_i + V_j) + a_2 E_{N_a}(V_i + V_j) + g_K \gl H |V_i| - g_K R_j(V_j)\right] U^2   \\
	\leq &\, - \left[ \frac{1}{\tau_k} \Pi^2 + \left(a_0 + \frac{1}{2} a_2 (V_i^2  + V_j^2) \right) U^2 - \left(a_1 E_{N_a} + g_K \gl H |E_K| \right) + n P)U^2 \right]   \\
	&\, + \frac{1}{2\tau_K} \Pi^2 + \frac{1}{2\tau_K} \gl^2 H^2 U^2 + \left[\frac{1}{6}\, a_2\, (V_i^2 + V_j^2) + \frac{1}{6} a_2 (V_i^2 + V_j^2) + \frac{1}{6}\, a_2\, V_i^2 \right] U^2   \\
	&\, + g_K ( |R_i(0)| e^{- t/(2\tau_K)} + H) U^2 + 6 \left[\frac{a_1^2}{a_2} + \frac{ a_2}{E_{N_a}^2} + \frac{1}{a_2} (g_K \gl H)^2\right] U^2   \\
	\leq &\, - \left[ \frac{1}{2\tau_k} \Pi^2 + \left(a_0 + \frac{1}{2\tau_K} \gl^2 H^2 + n P \right)U^2 \right] + g_K \left( |R_i(0)| e^{- t/(2\tau_K)} + H \right)U^2   \\
	&\, + \left(a_1 E_{N_a} + g_K \gl H |E_K| \right) U^2 + 6 \left[\frac{a_1^2}{a_2} + \frac{ a_2}{E_{N_a}^2} + \frac{1}{a_2} (g_K \gl H)^2\right] U^2, \quad t > 0,
	\end{split}
\eeq
where we see the cancellation of the terms
$$
	- \frac{1}{2} a_2 (V_i^2 + V_j^2) + \left[\frac{1}{6}\, a_2\, (V_i^2 + V_j^2) + \frac{1}{6} a_2 (V_i^2 + V_j^2) + \frac{1}{6}\, a_2\, V_i^2 \right] \leq 0.
$$
Obviously there exists a finite time $T_0 > 0$ depending on the initial state $(V^0, R^0)$ of a solution such that $|R_i(0)| e^{- t/(2\tau_K)} \leq 1$ for any $t > T_0$. Thus we have
\beq \bl{key}
	\begin{split}
	&\frac{1}{2} \frac{d}{dt} (U^2(t) + \Pi^2(t))    \\
	\leq &\, - \left[ \frac{1}{2\tau_k} \Pi^2(t) + \left(a_0 + \frac{1}{2\tau_K} \gl^2 H^2 + n P \right) U^2(t) \right] + g_K (1 + H) U^2(t)   \\
	&\, + \left(a_1 E_{N_a} + g_K \gl H |E_K| \right) U^2(t) + 6 \left[\frac{a_1^2}{a_2} + \frac{ a_2}{E_{N_a}^2} + \frac{1}{a_2} (g_K \gl H)^2\right] U^2(t)   \\
	\leq &\, - \left[ \frac{1}{2\tau_k} \Pi^2(t) + \left(a_0 + \frac{1}{2\tau_K} \gl^2 H^2 + n P \right) U^2(t) \right]    \\
	&\, + \left[ g_K (1 + H) + |a_1 E_{N_a}| + g_K \gl H |E_K| +\frac{6a_1^2}{a_2} + \frac{6a_2}{E_{N_a}^2} + \frac{6}{a_2} (g_K \gl H)^2 \right] U^2(t), \;\, t > T_0.
	\end{split}
\eeq
Denote the sum of positive constants in the rightmost term of \eqref{key} by 
\beq \bl{Q}
	Q = g_K (1 + H) + |a_1 E_{N_a}| + g_K \gl H |E_K| +\frac{6a_1^2}{a_2} + \frac{6a_2}{E_{N_a}^2} + \frac{6}{a_2} (g_K \gl H)^2.
\eeq
Now we have shown that the gap functions $U(t) = U_{ij}(t) = V_i(t) - V_j(t)$ and $\Pi(t) = \Pi_{ij}(t) = R_i(t) -R_j(t)$ for all neuron pairs $(\mathcal{N}_i, \mathcal{N}_j), 1 \leq i \neq j \leq n$, in the Hodgkin-Huxley-Wilson neural network $\mathcal{NW}$ satisfy the Gronwall inequality
\beq \bl{Fnl}
	\frac{1}{2} \frac{d}{dt} (U^2(t) + \Pi^2(t)) \leq - \left[ \frac{1}{2\tau_k} \Pi^2(t) + \left(a_0 + \frac{1}{2\tau_K} \gl^2 H^2 + n P  - Q\right) U^2(t) \right], \;\, t > T_0.
\eeq

Step 3. Finally we can solve this linear differential inequality with respect to all the gap functions $(U^2(t), \Pi^2(t))$ and establish an explicitly computable threshold condition for the network coupling strength $P$ to achieve complete synchronization. 

If the threshold condition \eqref{Resh} in this theorem is satisfied, then 
$$
	a_0 + + \frac{1}{2\tau_K} \gl^2 H^2 + n P > Q.
$$
From the inequality \eqref{key} we can confirm
\beq \bl{UR}
	\frac{1}{2} \frac{d}{dt} (U_{ij}^2 + \Pi_{ij}^2) \leq - \min \left\{ \frac{1}{2\tau_k}, \left[a_0 + \frac{1}{2\tau_K} \gl^2 H^2 + n P - Q \right] \right\} (U_{ij}^2 +  \Pi_{ij}^2), \;\; t > T_ 0.
\eeq
Therefore, for any solution $(V(t), R(t))$ of the model equations \eqref{Meq}-\eqref{mR}, all the gap functions $U_{ij}(t), \Pi_{ij}(t)$ as $t \to \infty$ converge to zero at a uniform exponential rate $\mu (P)$ shown in \eqref{rate}. Namely, for any $1 \leq i < j \leq n$,
\beq \bl{cov}
	\begin{split}
	\| U_{ij}(t) &\, \|^2 + \|\Pi_{ij}(t)\|^2 = \|V_i(t) - V_j(t)\|^2 + \|R_i(t) - R_j(t)\|^2   \\[2pt]
	\leq &\, e^{- \mu(P) t}\, (\|V_i^0 - V_j^0\|^2 + \|R_i^0 - R_j^0\|^2) \to 0, \;\; \text{as} \;\; t \to \infty.
	\end{split}
\eeq
According to Definition \eqref{Def}, the Hodgkin-Huxley-Wilson neural network $\mathcal{NW}$ is complete exponentially synchronized if the threshold condition is satisfied. The proof is completed. 
\end{proof}

The homogeneous setting of the HHW neural networks described by the model \eqref{Meq}-\eqref{mR} assumes all the neurons in the network have the same biological parameters. Realistically neurons may exhibit parameter mismatch and form a heterogeneous network. For any heterogeneous Hodgkin-Huxley-Wilson neural network, one can aim to prove approximate synchronization defined and treated in the recent publication \cite{YC}.

\section{\textbf{Fractional Memristive Hodgkin-Huxley-Wilson Neural Networks}}  

In recent two decades, research on time-fractional differential equations has been very active and extensive in modeling dynamics of interdisciplinary processes ranging from viscoelastic liquids, image processing and encryption, dielectric polarization, to mathematical biology. Time-fractional differential equation models have the advantage of continuum long-term memory properties in comparison to classical integer-order ODE with discrete time-delay terms. It is also recognized \cite{XY} that memristor is a promising and ideal element in modeling the spiking neural networks to mimic memorizing and learning functions of the human brain.

In this section we shall extend the synchronization Theorem \ref{ThM} to another new model called fractional memristive Hodgkin-Huxley-Wilson neural networks composed of $n\, (\geq 2)$ cross-coupled neurons, denoted by $\mathcal{MW}$. Each neuron $\mathcal{N}_i, 1 \leq i \leq n$, is presented by the Caputo fractional differential equations: 
\beq \bl{Feq}
	\begin{split}
	D_c^\ap V_i (t) &\, = - m_\infty (V_i)(V_i - E_{N_a}) - g_K R_i(V_i) (V_i - E_K)   \\[3pt]
	&\, + k\, \psi(\rho) V_i + J + \sum_{j=1}^n P(V_j - V_i),  \;\;  1 \leq i \leq n,  \\
	D_c^\ap R_i (t) &\, =  \frac{1}{\tau_K} \left(-R_i + R_\infty (V_i) \right),   \;\; 1 \leq i \leq n,   \\
	D_c^\ap \rho(t) &\, = \sum_{i=1}^n \ga_i V_i - b \rho,  \quad  \text{for}  \;\;  t > 0, \;\; \ap \in (0, 1).
	\end{split} 
\eeq
Here the Caputo fractional time-derivative of order $\ap \in (0, 1)$ denoted by $\dca$ of a scalar function $y(t)$ is defined by 
$$
	\dca y(t) = \frac{1}{\G (1 - \ap)} \int_0^t (t - s)^{-\ap} y^\prime (s)\, ds, \quad  t > 0,
$$
and $\G(z) = \int_0^\infty t^{z-1} e^{-t} dt, \,z > 0,$ is the Gamma function, for a scalar function $y(\cdot) \in C[0, \infty)$ and $y^\prime (\cdot) \in L^1_{loc} [0, \infty)$.  The initial states of the system \eqref{Feq} will be denoted by 
\beq \bl{ic}
	 V_i^0 = V_i(0),  \quad R_i^0 = R_i(0),  \quad \rho^0 = \rho (0), \;\; 1 \leq i  \leq n.
\eeq
Beside the assumption \eqref{Asp} still in place, we assume that the memristive coupling constant $k > 0$ and the memristor window function $\psi (s) = s(1 - \gb s),\, s \in \mathbb{R}$. The constants $\gb > 0, \ga_i  \in \mathbb{R}$ and $b > 0$. The basic theory and some applications of fractional calculus and fractional differential eqations can be found in books \cite{KD, BJ} and references therein.

In order to pursue the goal of asymptotic dynamics for solutions of this new fractional memristive model, we recall the following lemmas presented and briefly proved in \cite{YY}. 
\begin{lemma} \bl{L1}
       For $\ap \in (0, 1)$ and any given  $T > 0$, if $f \in AC [0, T]$, then it holds that
\beq \bl{prl}
	 \frac{1}{2}\, \dca f^2(t) \leq f(t) \dca f(t), \;\; \text{for} \;\; t \in (0,T).
\eeq
\end{lemma}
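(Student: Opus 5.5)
The inequality \eqref{prl} is equivalent to nonnegativity of
$$
f(t)\,\dca f(t) - \tfrac{1}{2}\,\dca f^2(t) = \frac{1}{\G(1-\ap)} \int_0^t (t-s)^{-\ap} \bigl(f(t) - f(s)\bigr) f'(s)\, ds ,
$$
because $(f^2)' = 2ff'$ almost everywhere for $f \in AC[0,T]$. The plan is to show this integral is nonnegative by integrating by parts, so that the integrand becomes a manifestly nonnegative square times a positive kernel.

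Fix $t \in (0,T)$ and set $w(s) = f(t) - f(s)$. Then $w(t) = 0$ and $w'(s) = -f'(s)$, so the integrand equals $-(t-s)^{-\ap}\, w(s) w'(s) = -\tfrac12 (t-s)^{-\ap}\, \frac{d}{ds} w^2(s)$. To avoid the singularity at $s = t$, integrate by parts on $[0, t-\ve]$:
$$
-\frac12 \int_0^{t-\ve} (t-s)^{-\ap} \frac{d}{ds} w^2\, ds = -\frac12 \ve^{-\ap} w^2(t-\ve) + \frac12 t^{-\ap} w^2(0) + \frac{\ap}{2} \int_0^{t-\ve} (t-s)^{-\ap-1} w^2(s)\, ds .
$$
The last two terms are nonnegative. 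It then remains to let $\ve \to 0^+$.

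The main obstacle is the boundary term $\ve^{-\ap} w^2(t-\ve)$, which must be shown to tend to $0$ for the argument to close. For merely absolutely continuous $f$ this is not automatic. At Lebesgue points of $f'$, $w(t-\ve) = O(\ve)$, so the term is $O(\ve^{2-\ap}) \to 0$; this covers almost every $t$, which is the sense in which \eqref{prl} is used later. For general $t$, I would first try absolute continuity together with the finiteness of $\int_0^t (t-s)^{-\ap} |f'(s)|\, ds$, which is needed anyway for $\dca f(t)$ to exist; this gives $\ve^{-\ap}|w(t-\ve)| \le \int_{t-\ve}^t (t-s)^{-\ap} |f'(s)|\, ds \to 0$. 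Since $|w(t-\ve)|$ is bounded, the boundary term then vanishes. The remaining integral converges monotonically to a nonnegative limit, possibly $+\infty$, which is harmless given that the left side is finite. Hence the difference is $\ge 0$, which yields \eqref{prl}.
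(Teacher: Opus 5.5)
Your proof is correct. The paper has no in-paper proof to compare it with: Lemma \ref{L1} is only recalled, with the remark that it was ``presented and briefly proved'' in \cite{YY}. Your argument is therefore a genuine addition, a self-contained proof by the standard integration-by-parts route that is usually given for continuously differentiable $f$.

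It also gives more than the inequality. After letting $\ve\to0^+$, you actually get the exact identity
\[
f(t)\,\dca f(t)-\tfrac12\,\dca f^2(t)=\frac{1}{2\G(1-\ap)}\Bigl[t^{-\ap}\bigl(f(t)-f(0)\bigr)^2+\ap\int_0^t (t-s)^{-\ap-1}\bigl(f(t)-f(s)\bigr)^2\,ds\Bigr]\ \ge 0 .
\]
This identity shows that the last integral is automatically finite, so your ``possibly $+\infty$'' case never occurs.

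The only delicate step under the stated hypothesis $f\in AC[0,T]$ is the boundary term $\ve^{-\ap}\bigl(f(t)-f(t-\ve)\bigr)^2$. In the $C^1$ setting it is trivially $O(\ve^{2-\ap})$, but that estimate is not available here. Your bound
\[
\ve^{-\ap}\bigl|f(t)-f(t-\ve)\bigr|\le\int_{t-\ve}^t (t-s)^{-\ap}|f'(s)|\,ds\to 0
\]
handles it cleanly.

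Your proof also makes explicit a hypothesis the statement leaves implicit. For $f\in AC[0,T]$ the Caputo integral is guaranteed to converge only for a.e.\ $t$. So the inequality should be read at those $t$ where $\int_0^t (t-s)^{-\ap}|f'(s)|\,ds<\infty$.

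At such $t$, $\dca f^2(t)$ exists as well, because $|(f^2)'|\le 2\|f\|_\infty|f'|$. You use this silently when you merge the two Caputo integrals into a single integral. It is worth stating that step explicitly.
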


\begin{lemma} \textup{(Fractional Gronwall Inequality)}  \bl{L2}
      If a nonnegative function $x(t)$ is absolutely continuous for $t \geq 0$ and satisfies the inequality
\beq \bl{pq}
	\dca x(t) \leq p - q\, x(t), \;\;  \text{for} \;\;  t > 0,
\eeq
where the fractional order $\ap \in (0, 1)$, $p$ and $q$ are positive constants, then it holds that
\beq \bl{fG}
	\begin{split}
	x(t) &\,\leq x(0)\, E_\ap (-q\, t^\ap) + \, \frac{p}{q}\,\G(\ap) \left(1 - E_\ap (- q\, t^\ap) \right)      \\
	&\, < x(0) \left( \frac{\G(1 + \ap)}{\G(1 + \ap) + q t^\ap} \right) + \, \frac{p}{q}\,\G(\ap),  \;\;  \text{for} \; \; t > 0.
	\end{split}
\eeq
\end{lemma}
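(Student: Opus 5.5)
We have to propose a proof of Lemma L2, the fractional Gronwall inequality. The plan is a comparison argument against the linear Caputo equation, which is solved by Mittag-Leffler functions.

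First, consider the linear problem
\[
\dca y(t) = p - q\,y(t), \qquad y(0)=x(0).
\]
By Laplace transform, or directly from the series $E_\ap(z)=\sum_{k\ge0} z^k/\G(k\ap+1)$, its unique solution is
\[
y(t) = x(0)\,E_\ap(-q t^\ap) + \frac{p}{q}\bigl(1-E_\ap(-q t^\ap)\bigr).
\]
This formula rests on two standard facts:
\[
\dca E_\ap(-q t^\ap) = -q\,E_\ap(-q t^\ap), \qquad \dca 1 = 0 .
\]

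Second, set $w = x - y$. Then $w(0)=0$ and
\[
\dca w(t) \le -q\,w(t).
\]
The goal is to show $w\le 0$. I would write $\dca w = -q w - h$ with $h\ge 0$. Converting to the Volterra integral form gives
\[
w(t) = -\int_0^t (t-s)^{\ap-1} E_{\ap,\ap}\bigl(-q(t-s)^\ap\bigr)\,h(s)\,ds ,
\]
and the kernel is nonnegative because $E_{\ap,\ap}(-z)\ge 0$ for $z\ge0$. This nonnegativity is the one real obstacle. It follows from complete monotonicity of $E_\ap(-z)$ (Pollard), since $t^{\ap-1}E_{\ap,\ap}(-qt^\ap) = -q^{-1}\tfrac{d}{dt}E_\ap(-qt^\ap)$. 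I would cite this fact rather than prove it.

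Third, from $w\le 0$ we get $x(t)\le y(t)$. The factor $\G(\ap)$ in the statement's second term is then harmless provided $\G(\ap)\ge 1$, which holds for $\ap\in(0,1)$. So $\tfrac{p}{q}\bigl(1-E_\ap\bigr)\le \tfrac{p}{q}\G(\ap)\bigl(1-E_\ap\bigr)$, since $0\le E_\ap(-qt^\ap)\le 1$. This gives the first inequality in \eqref{fG}.

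For the second, strict inequality I would use the known upper bound
\[
E_\ap(-z) \le \frac{1}{1+z/\G(1+\ap)} = \frac{\G(1+\ap)}{\G(1+\ap)+z}, \qquad z\ge 0 ,
\]
which can again be cited (Simon's bound). Together with dropping the factor $(1-E_\ap(-qt^\ap))<1$ for $t>0$, this yields the stated estimate.
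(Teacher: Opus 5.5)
The paper contains no proof of this lemma. It only recalls it from \cite{YY}, so your proposal has to stand on its own, and it does. Your argument compares $x$ with the exact solution $y(t)=x(0)E_\ap(-qt^\ap)+\frac{p}{q}\bigl(1-E_\ap(-qt^\ap)\bigr)$ of $\dca y=p-qy$. You then represent $w=x-y$ through the resolvent kernel $t^{\ap-1}E_{\ap,\ap}(-qt^\ap)=-q^{-1}\frac{d}{dt}E_\ap(-qt^\ap)\ge 0$ and invoke Simon's bound $E_\ap(-z)\le \G(1+\ap)/(\G(1+\ap)+z)$. This is the standard route, and compared with the bare citation it gives a self-contained proof and a sharper conclusion. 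Indeed, $x(t)\le y(t)$ holds with the constant $p/q$, so the factor $\G(\ap)>1$ in the first line of \eqref{fG} is slack. Your absorption of that factor via $0\le 1-E_\ap(-qt^\ap)\le 1$ is correct. The same argument also covers $p=0$ (with $\le$ instead of $<$), which is the case the paper actually uses in \eqref{DG}, where the lemma as stated (with $p>0$) does not literally apply.

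Three details deserve a sentence each in a write-up.

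First, the Volterra step is legitimate because $w$ is absolutely continuous. Then $\dca w$ is the Riemann--Liouville integral of order $1-\ap$ of $w'\in L^1_{loc}$, so $h=-qw-\dca w\in L^1_{loc}$ with $h\ge 0$ a.e. Integrating to order $\ap$ with $w(0)=0$ gives a weakly singular linear Volterra equation whose unique solution is your formula.

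Second, for the sign of the kernel you only need that $z\mapsto E_\ap(-z)$ is nonincreasing, which follows from Pollard's complete monotonicity. Compose it with the increasing map $t\mapsto qt^\ap$.

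Third, the strict inequality in the second line of \eqref{fG} comes from $E_\ap(-qt^\ap)>0$ for $t>0$ together with $p/q>0$. Positivity follows, e.g., from the companion lower bound $E_\ap(-z)\ge 1/(1+\G(1-\ap)z)$ in Simon's theorem. Bounding the first term also uses $x(0)\ge 0$.
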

The Mittag-Leffler function $E_\ap (z)$ and the two-parameter Mittag-Leffler function $E_{\ap_1, \ap_2} (z)$ \cite{BJ} defined by  
$$
	E_\ap (z) = \sum_{n = 0}^\infty \frac{z^n}{\G(n \ap + 1)},  \quad  E_{\ap_1, \ap_2} (z) = \sum_{n = 0}^\infty \frac{z^n}{\G(n \ap_1 + \ap_2)},  \quad z \in \mathbb{C},
$$
played key roles in fractional differential equations (FDE) and fractional analysis. $E_\ap (z)$ is an entire function, in particular $E_1(z) = e^z$. The function $E_\ap (-x)$ with real variable $x > 0$ is especially more involved in treatment of FDE. 

\begin{lemma} \bl {L3}
	For an initial value problem of autonomous Caputo-fractional differential equation\textup{(}s\textup{)}
$$
	\dca x(t) = f(x), \quad x(0) = x_0 \in \Omega \subset \mathbb{R}^d,
$$
if the scalar or vector function $f(x)$ is locally Lipschitz on its bounded or unbounded domain $\Omega \subset \mathbb{R}^d$ and a solution of this initial value problem satisfies 
\beq \bl{dp}
	\|x(t)\| \leq \|x_0\|\, C(t) + L, \quad  t \in I_{max} = [0, T_{max}),
\eeq
where $C(t)$ is a positive non-increasing continuous function, $C(t) \to 0$ as $t \to \infty$, and $L > 0$ is a constant, then the maximal existence interval of this solution $I_{max} = [0, \infty)$ so that $x(t)$ is a global solution in time.  
\end{lemma}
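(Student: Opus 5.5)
We will prove Lemma \ref{L3} by contradiction, combining the standard local existence and continuation theory for Caputo fractional differential equations with the a priori bound \eqref{dp}. The continuation principle we rely on is the one for Volterra integral equations. Because $f$ is locally Lipschitz, the initial value problem is equivalent to
$$
	x(t) = x_0 + \frac{1}{\G(\ap)} \int_0^t (t-s)^{\ap - 1} f(x(s))\, ds .
$$
A Picard-type contraction argument on a closed ball then gives a unique local continuous solution on an interval whose length depends only on the radius of that ball and on the Lipschitz and sup bounds of $f$ there. The solution therefore extends to a maximal interval $[0, T_{max})$. If $T_{max} < \infty$, then $\limsup_{t \to T_{max}^-} \|x(t)\| = \infty$, since otherwise the trajectory stays in a compact subset of $\Omega$ and the solution can be continued past $T_{max}$. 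We will also use the implicit standing assumption that the solution does not leave $\Omega$ through its boundary, which is automatic when $\Omega = \mathbb{R}^d$, the case relevant for \eqref{Feq}.

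Suppose, to reach a contradiction, that $T_{max} < \infty$. By \eqref{dp} and the fact that $C(t)$ is non-increasing,
$$
	\|x(t)\| \leq \|x_0\|\, C(0) + L =: K \quad \text{for all } t \in [0, T_{max}),
$$
so the trajectory remains in the closed ball $\bar B_K$. On the compact set $\bar B_K \cap \Omega$, the function $f$ is bounded by some $M_K$ and Lipschitz with some constant $\ell_K$. Two consequences follow.

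First, the integral representation gives uniform continuity of $x$ near $T_{max}$. For $t_1 < t_2 < T_{max}$, splitting the kernel difference yields
$$
	\|x(t_2) - x(t_1)\| \leq \frac{2 M_K}{\G(\ap+1)} (t_2 - t_1)^\ap .
$$
Hence $x(T_{max}^-)$ exists, and $x$ extends continuously to $[0, T_{max}]$.

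Second, we restart from $T_{max}$. The memory term is not autonomous, so we cannot simply reuse the ODE argument. Instead we set up the fixed-point problem on $C([0, T_{max} + \delta])$ for functions that agree with the known solution on $[0, T_{max}]$. The integral operator maps the ball $\{\|y - x(T_{max})\| \leq 1\}$ on $[T_{max}, T_{max} + \delta]$ into itself and is a contraction there, provided $\delta$ is small enough in terms of $M_{K+1}$ and $\ell_{K+1}$; one takes $\delta^\ap$ of order $\G(\ap+1)/(M_{K+1} + \ell_{K+1})$. This produces a solution on $[0, T_{max} + \delta)$, contradicting the maximality of $T_{max}$.

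We expect the main obstacle to be this continuation step. Because of the nonlocal memory, the "restart" has to be done for the whole history-dependent Volterra equation rather than for an autonomous problem with new initial data. The decisive point is that the required smallness of $\delta$ depends only on the uniform bound $K$, which \eqref{dp} supplies. If a cleaner route is needed, we will instead cite the standard continuation theorem for Caputo FDEs in \cite{KD, BJ}, which states that a bounded solution on $[0, T_{max})$ with $T_{max} < \infty$ can always be extended. That theorem closes the argument directly once the uniform bound above is established, and we conclude that $I_{max} = [0, \infty)$.
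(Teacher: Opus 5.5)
The paper gives no proof of Lemma~\ref{L3}; it only recalls the lemma from \cite{YY}, and your argument is the one it implicitly relies on, and it is correct: \eqref{dp} with $C$ non-increasing gives $\|x(t)\|\le \|x_0\|C(0)+L$ on $[0,T_{max})$, which contradicts the blow-up alternative for the Volterra formulation when $T_{max}<\infty$, and your sketch of that alternative (the H\"older estimate $\frac{2M_K}{\Gamma(\alpha+1)}(t_2-t_1)^\alpha$ giving $x(T_{max}^-)$, then a contraction on $[T_{max},T_{max}+\delta]$ with the history integral frozen) is sound. Your caveat about $\Omega$ is essential rather than cosmetic, since for a proper subdomain the statement is false: $f\equiv -1$ on $\Omega=(0,\infty)$ with $x_0=1$ gives $x(t)=1-t^\alpha/\Gamma(1+\alpha)$, which satisfies \eqref{dp} with $L=1$ but leaves $\Omega$ at $t=\Gamma(1+\alpha)^{1/\alpha}$. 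So you need $\Omega=\mathbb{R}^d$, as in \eqref{Feq}, or else assume the closure of the orbit lies in $\Omega$ and replace the non-compact set $\bar B_K\cap\Omega$ by a compact neighborhood of that closure, both for $M_K,\ell_K$ and for the ball used in the restart.
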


It is just known that autonomous Caputo fractional differential equations of oder $\ap \in (0, 1)$ on a domain $\Omega \subset \mathbb{R}^d$ satisfying the global Lipschitz condition (formulated into weakly singular integral equations) generate semi-dynamical systems in a space of time-continuous functions \cite{DK}. Under certain dissipative conditions, there exist absorbing sets in the state space $\mathbb{R}^d$ for solutions of autonomous Caputo fractional differential equations of oder $\ap \in (0, 1)$ only satisfying local Lipschitz condition \cite{PK}.

\begin{theorem} \label{T3}
	For the fractional memristive Hodgkin-Huxley-Wilson neural network $\mathcal{MW}$ presented by the model \eqref{Feq} with the assumption \eqref{Asp} and $k > 0, \gb > 0, a_0 > k/\gb$, there exists a bounded absorbing set in the state space $\mathbb{R}^{2n}$, which is the ball
\beq \label{Br}
	B_{\ap}^* = \{ (v, r) \in \mathbb{R}^{2n}: \| (v, r) \|^2 \leq G_{\ap}\}.
\eeq 
where $G_{\ap} > 0$ given in \eqref{Ga} independent of any initial state is a uniform ultimate bound for all solutions.
\end{theorem}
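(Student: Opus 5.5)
Following the proof of Theorems \ref{T1}--\ref{T2}, I would work with the fractional energy of the whole system $x(t)=\sum_{i=1}^n (V_i^2+R_i^2)+\rho^2$ and apply the fractional calculus Lemmas \ref{L1}--\ref{L3}. The state space really involves $\rho$ as well, so the absorbing ball will be taken in the variables $(V,R,\rho)$. The bound $G_\ap$ will then also control the projection onto the $(v,r)$ coordinates.

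\textbf{Step 1: the $R_i$ components.} Lemma \ref{L1} gives $\frac12\dca R_i^2\le R_i\dca R_i$. Using $0<R_\infty<H$ as in \eqref{Ri}, this yields
$$
\dca R_i^2\le \frac{1}{\tau_K}(-R_i^2+H^2).
$$
Lemma \ref{L2} then gives $R_i^2(t)\le R_i^2(0)E_\ap(-t^\ap/\tau_K)+H^2\G(\ap)$, a bound uniform in time that becomes independent of the initial data asymptotically. The quantity $g_K\sqrt{R_i^2(0)+C}$ appearing in the $V$-estimate will be handled as in \eqref{VB}: after a finite time depending on the initial data it can be replaced by $g_K\sqrt{1+H^2\G(\ap)}$.

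\textbf{Step 2: the $V_i$ and $\rho$ equations.} Multiplying by $V_i$ and $\rho$ and using Lemma \ref{L1}, the coupling term again contributes $-\tfrac12 P\sum_{i,j}(V_i-V_j)^2\le0$. The terms already present in Theorem \ref{T1} are absorbed by Young's inequality \eqref{tm}, using $-a_2V_i^4$ and $-\tfrac12 a_0V_i^2$. The memristive term contributes
$$
kV_i^2\psi(\rho)=k\rho V_i^2-k\gb\rho^2V_i^2\le \frac{k}{4\gb}V_i^2,
$$
since $\rho-\gb\rho^2\le 1/(4\gb)$. This is a linear-in-$V_i^2$ growth term. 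The hypothesis $a_0>k/\gb$ is exactly what lets the dissipation $-a_0V_i^2$ dominate it, leaving a margin $a_0-k/(4\gb)>\tfrac34 a_0$ that still absorbs the Young splittings. For the $\rho$ equation,
$$
\rho\sum_i\ga_iV_i\le \frac b2\rho^2+\frac1{2b}\Big(\sum_i\ga_i^2\Big)\sum_iV_i^2.
$$

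\textbf{Step 3 (main obstacle).} The cross term $\frac1{2b}\sum\ga_i^2\sum V_i^2$ must be absorbed. Its coefficient is not controlled by $a_0-k/\gb$, so I would instead estimate it with the quartic term: $cV_i^2\le \tfrac14a_2V_i^4+c^2/a_2$. Since $-a_2V_i^4$ was only half used in \eqref{tm}, this costs only a constant. If needed, I would weight the energy as $\sum(V_i^2+R_i^2)+\theta\rho^2$ with small $\theta>0$. The result is
$$
\dca x(t)\le p-q\,x(t),\qquad q=\min\{a_0-k/\gb,\ b,\ 1/\tau_K\}>0,
$$
with $p$ independent of the initial data after the finite time from Step 1. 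Lemma \ref{L2} then bounds $x(t)$ by $x(0)\,\G(1+\ap)/(\G(1+\ap)+qt^\ap)+\frac pq\G(\ap)$. Lemma \ref{L3} gives global existence, and I would set $G_\ap=1+\frac pq\G(\ap)$ plus the $H^2$-type contribution. Since the decaying factor tends to $0$ uniformly for bounded initial sets, every bounded set enters $B_\ap^*$ after a finite time, which proves the absorbing property.

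The delicate point is that Lemma \ref{L2} applies to an inequality valid from time $0$. A time-shifted start is not directly available for Caputo derivatives because of the memory term. I would therefore keep the initial-data-dependent constant $p(R^0)$ throughout, using $R_i^2\le R_i^2(0)+H^2\G(\ap)$, and then use the decay of the Mittag-Leffler term to show that the limsup is below a data-independent $G_\ap$. This mirrors the passage from \eqref{Sb} to \eqref{lsp}.
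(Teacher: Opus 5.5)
\textbf{There is a genuine gap at exactly the point the theorem is about: that $G_\ap$ does not depend on the initial state.}

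In your scheme, the terms $-g_KR_iV_i^2$ and $g_KE_KR_iV_i$ from the $V_i$-equation are controlled through $R_i^2(t)\le R_i^2(0)+H^2\G(\ap)$. So the inequality $\dca x\le p-qx$ that is actually valid on all of $[0,\infty)$ has $p=p(R^0)$. Lemma \ref{L2} then yields only $\limsup_{t\to\infty}x(t)\le \G(\ap)\,p(R^0)/q$. The Mittag-Leffler factor in \eqref{fG} multiplies $x(0)$ alone and has no effect on the $R^0$-dependence of $p$. Its decay therefore cannot give a data-independent bound, and you obtain no fixed ball that all trajectories from a bounded set with large $|R^0|$ eventually enter. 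The other option in your Step 1, replacing $R_i^2(0)$ after a finite time $T$, means applying Lemma \ref{L2} to an inequality valid only for $t>T$. That is the restart you yourself correctly rule out for Caputo derivatives. (The paper's proof passes from \eqref{DV} to \eqref{Ga} by analogy with \eqref{K}--\eqref{lsp} and does not justify this step either.)

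\textbf{A cleaner repair uses your combined-energy idea with a large weight on the recovery variables, not a small one on $\rho$.}

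Estimate $g_K|R_i|V_i^2\le\frac{a_2}{8}V_i^4+\frac{2g_K^2}{a_2}R_i^2$ and $g_K|E_K|\,|R_iV_i|\le\frac{a_0}{4}V_i^2+\frac{g_K^2E_K^2}{a_0}R_i^2$. Take $x=\sum_iV_i^2+\Theta\sum_iR_i^2+\rho^2$, with $\Theta/(2\tau_K)$ larger than the sum of these two $R_i^2$-coefficients, and use $\Theta R_i\dca R_i\le\frac{\Theta}{2\tau_K}(H^2-R_i^2)$. Then $\dca x\le p-qx$ holds from $t=0$ with $p$ and $q$ independent of the initial state. Lemma \ref{L2} gives both the uniform ultimate bound and an absorbing time for each bounded set. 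The radius differs from \eqref{Ga} but is data-independent.

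\textbf{Alternative repair.} Keep $R_i(t)$ inside the forcing and replace Lemma \ref{L2} by comparison with $\dca y=p(t)-qy$. Its solution $y(0)E_\ap(-qt^\ap)+\int_0^t(t-s)^{\ap-1}E_{\ap,\ap}(-q(t-s)^\ap)\,p(s)\,ds$ has a positive kernel of total mass $1/q$. Since $p(t)$ is eventually below a data-independent level, uniformly on bounded sets, this gives a limsup of the type in \eqref{Ga}.

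\textbf{Two smaller points.} First, \eqref{tm} already spends all of $a_2V_i^4$: half on the $V_i^2$ term and half on the $V_i^3$ term. There is no unused half, so the Young splittings must be redone with smaller fractions. Second, once the quartic is used this way it also absorbs $\frac{k}{4\gb}V_i^2$ and your $\rho$-cross term at constant cost. So the hypothesis $a_0>k/\gb$ is not what makes the estimate close.
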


\begin{proof}
	The memductance-potential synapse function in the first equation of this neural network model \eqref{Feq} has the property
\beq \bl{me}
	k \psi (\rho) V_i^2 = k \rho(1 - \gb \rho) V_i^2 = k (\rho - \gb \rho^2) V_i^2 \leq k \left(\frac{1}{2\gb} - \frac{\gb}{2} \rho^2\right)V_i^2 \leq \frac{k}{2\gb} V_i^2
\eeq
since $k > 0$ and $\gb > 0$, which is plugged in the estimate inequality similar to \eqref{Vi} by Lemma \ref{L1}. Then we get
\begin{equation} \bl{mVi}
	\begin{split}
	&\frac{1}{2} \dca \left(\sum_{i = 1}^n V_i^2 (t)\right) \leq \frac{1}{2} \dca \left(\sum_{i = 1}^n V_i^2 (t) \right) + \frac{1}{2} P \sum_{i=1}^n \sum_{j=1}^n (V_i - V_i)^2    \\
	= &\, - \sum_{i=1}^n \left[ \left((a_0 - \frac{k}{2\gb}\right) V_i^2 + a_1 V_i^3 + a_2 V_i^4 + g_K R_i V_i^2 \right]     \\
        &\, + \sum_{i=1}^n \left[E_{N_a} (a_0 V_i + a_1V_i^2 + a_2 V_i^3) + g_K R_i E_K V_i +  J V_i \right], \;  t \in [0, T_{max}).
	\end{split}
\end{equation} 
Follow the steps in the proof of Theorem \ref{T1}, consequently it holds that
\beq \bl{DV}
	\dca \sum_{i = 1}^n V_i^2 (t) \leq - \left(a_0 - \frac{k}{\gb}\right) \sum_{i=1}^n V_i(t)^2 + M^*(R^0), \quad t \in [0, T_{max}),
\eeq
where 
\beq \bl{Ma}
	\begin{split}
	M^*(R^0) &\,=  \sum_{i=1}^n \frac{1}{(a_0 - \frac{k}{\gb})} \left[ |E_{N_a}| \left(a_0 - \frac{k}{\gb}\right) + g_K \sqrt{R_i^2(0) + H^2} \,|E_K| + |J | \right]^2   \\
        &\,+ \sum_{i=1}^n \left[ \frac{1}{a_2} \left(|E_{N_a} a_1| + g_K \sqrt{R_i^2(0) + H^2} \right)^2 + \frac{2(a_1 +E_{N_a} a_2)^4}{(a_2/2 )^3}\right].
        \end{split}
\eeq
By the fractional Gronwall inequality \eqref{fG} stated in Lemma \ref{L2}, \eqref{DV} implies that
\beq \bl{DVb}
         \sum_{i = 1}^n V_i^2 (t) <   \sum_{i = 1}^n V_i^2(0) \left( \frac{\G(1 + \ap)}{\G(1 + \ap) + (a_0 - k/\gb) t^\ap} \right) + \, \frac{M^*(R^0)}{(a_0 - k/\gb)}\,\G(\ap).
\eeq
Note that the fractional order $\ap \in (0, 1)$ so that 
$$
	\lim_{t \to \infty} \frac{\G(1 + \ap)}{\G(1 + \ap) + (a_0 - k/\gb) t^\ap} = 0.
$$
Therefore all the solutions of \eqref{Feq} exists for $t \in [0, \infty)$. Moreover, similar to \eqref{K} and \eqref{lsp}, here it follows that
$$
	\limsup_{t \to \infty} \left\|\left(V(t), R(t)\right) \right\|^2 = \limsup_{t \to \infty} \sum_{i=1}^n \left(V_i^2(t) + R_i^2(t) \right) < G_{\ap}
$$
and 
\beq \bl{Ga}
	\begin{split}
	G_{\ap} = &\, 1 + nH^2 + \sum_{i=1}^n \frac{\G(\ap)}{(a_0 - \frac{k}{\gb})^2} \left[|E_{N_a}| \left(a_0 - \frac{k}{\gb} \right) + g_K \sqrt{1 + H^2} \,|E_K| + |J | \right]^2   \\
        &\,+ \sum_{i=1}^n \frac{\G(\ap)}{(a_0 - \frac{k}{\gb})} \left[\frac{1}{ a_2} \left(|E_{N_a} a_1| + g_K \sqrt{1 + H^2} \right)^2 + \frac{2(a_1 +E_{N_a} a_2)^4}{(a_2/2 )^3}\right].
        \end{split}
\eeq
Therefore, $B^*_{\ap}$ in \eqref{Br} is an absorbing set for the solutions of this fractional memristive Hodgkin-Huxley-Wilson neural network $\mathcal{MW}$ and its asymptotic dynamics is globally dissipative. It concludes that the positive constant $G_\ap$ is a uniform ultimate bound for all solutions in the state space $\mathbb{R}^n$.
\end{proof}

As a corollary, it can be shown that the memductance function $\rho(t)$ as an auxiliary state variable of this neural network also admits a predicable ultimate upper bound. From the $\rho$-equation in \eqref{Feq} and the ultimate upper bound $G_\ap$ of $\sum_{i=1}^n V^2_i (t)$ in Theorem \ref{T3}, we have   
\beq \bl{Dro}
	\frac{1}{2} \dca \rho^2(t) \leq \sum_{i=1}^n\, \ga_i V_i(t) \rho(t) - b\, \rho^2 (t) \leq \frac{1}{2b} \left[\sum_{i=1}^n\, \ga_i V_i(t)\right]^2 - \frac{b}{2}\, \rho^2(t).
\eeq
By Theorem \ref{T3}, there is a finite time $T_{\ap}(V^0, R^0) \geq 0$ such that $\sum_{i=1}^n V^2_i(t) \leq G_{\ap}$ for all $t > T_{\ap}$. It follows from \eqref{Dro} that
$$
	 \dca \rho^2(t) \leq \frac{G_{\ap}}{b} \max \{\ga_i^2: 1 \leq i \leq n\} - b\, \rho^2 (t),  \quad  T > T_{\ap}.
$$
Then we can use Lemma \ref{L2} to obtain
$$
	\rho^2(t)\leq \rho^2(T_{\ap}) \left( \frac{\G(1 + \ap)}{\G(1 + \ap) + b\, t^\ap} \right) + \frac{G_{\ap}}{b^2} \max \{\ga_i^2: 1 \leq i \leq n\} \,\G(\ap),  \quad   t > T_{\ap}.
$$
Consequently, we get an ultimate upper bound of the memductance function:
\beq 
	\limsup_{t \to \infty} \rho^2(t) < 1 + \frac{G_{\ap}}{b^2} \max \{\ga_i^2: 1 \leq i \leq n\} \,\G(\ap).
\eeq 

Finally we prove that the fractional memristive Hodgkin-Huxley-Wilson neural network $\mathcal{MW}$ is complete synchronizable at a fractional-power convergence rate. 

\begin{theorem} \bl{T4}    
	Under the assumption \eqref{Asp} and $k > 0, \gb > 0, a_0 > k/\gb$, Caputo fractional memristive Hodgkin-Huxley-Wilson neural network $\mathcal{MW}$ described by the model \eqref{Feq} is complete synchronizable. If the threshold condition
\beq \bl{Resd}
	P > P_* = \max \left\{0, \; \frac{1}{n} \left[Q + \frac{k}{2\gb} - \left(a_0 + \frac{1}{2\tau_K} \gl^2 H^2\right)\right] \right\}
\eeq 
is satisfied by the network coupling strength, where the positive constant $Q$ in \eqref{Q} is independent of initial states, then the neural network $\mathcal{MW}$ is complete synchronized at a uniform fracttional-power convergence rate
\beq  \bl{map}
	\mu_{\ap} (P, t) = \frac{\G(1 + \ap)}{\G (1 +\ap) + 2\delta (P)\, t^{\ap}},
\eeq
where
\beq \bl{rt}
	\delta (P) = \min \left\{ \frac{1}{2\tau_K}, \left[ a_0 + \frac{1}{2\tau_K} \gl^2 H^2 + n P - Q - \frac{k}{2\gb} \right] \right\} > 0.
\eeq  
\end{theorem}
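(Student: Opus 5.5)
The plan is to mirror the three-step argument of Theorem \ref{ThM}, replacing classical derivatives by the Caputo derivative $\dca$. Lemma \ref{L1} converts the energy identities into fractional differential inequalities, and Lemma \ref{L2} with $p\to 0$ gives the Mittag-Leffler decay. Fix $1\le i\ne j\le n$ and set $U=V_i-V_j$, $\Pi=R_i-R_j$. The differencing equations are those of \eqref{Deq}, with $\dca$ in place of $d/dt$ and one extra term in the $U$-equation, namely
\[
k\psi(\rho)V_i-k\psi(\rho)V_j=k\psi(\rho)\,U .
\]
Because the same memductance $\rho(t)$ enters every neuron, this difference is \emph{linear} in $U$ and involves no gap in $\rho$. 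The coupling term again produces $-nPU$.

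\textbf{Step 1.} Apply Lemma \ref{L1} to $\Pi$ and $U$, which are absolutely continuous on every $[0,T]$ because the solutions are global by Theorem \ref{T3}. This gives $\tfrac12\dca\Pi^2\le \Pi\,\dca\Pi$ and $\tfrac12\dca U^2\le U\,\dca U$. The right-hand sides are then estimated exactly as in \eqref{Pi}, \eqref{Ut}, \eqref{PU}, \eqref{gT}, using the mean value theorem for $R_\infty$ and the same Young splittings. The only new contribution is $k\psi(\rho)U^2$. By the computation \eqref{me}, $k\psi(\rho)\le k/(2\gb)$ for all real $\rho$, so this term is bounded by $\tfrac{k}{2\gb}U^2$ uniformly, with no need for the ultimate bound on $\rho$. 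The bound on $-g_KR_j$ uses $R_j^2(t)\le R_j^2(0)e^{-t/\tau_K}+H^2$. That estimate must be rederived for the fractional $R$-equation: Lemma \ref{L1} together with Lemma \ref{L2} gives an ultimate bound of the form $|R_j(t)|\le 1+C$ for $t>T_0$. More simply, I would use that $R_j$ is ultimately bounded by the same constant absorbed into $Q$, after checking that the fractional Gronwall constant $\G(\ap)$ does not spoil it. If it does, I would adjust $Q$ accordingly, since the statement takes $Q$ from \eqref{Q}.

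\textbf{Step 2.} Collect terms as in \eqref{key}. For $t>T_0$,
\[
\tfrac12\dca(U^2+\Pi^2)\le -\Big[\tfrac{1}{2\tau_K}\Pi^2+\Big(a_0+\tfrac{1}{2\tau_K}\gl^2H^2+nP-Q-\tfrac{k}{2\gb}\Big)U^2\Big]\le -\delta(P)\,(U^2+\Pi^2),
\]
where $\delta(P)>0$ exactly under the threshold \eqref{Resd}.

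\textbf{Step 3.} Set $x(t)=U^2+\Pi^2$, so that $\dca x\le -2\delta(P)x$. Apply the comparison of Lemma \ref{L2} with $p=0$. If the lemma requires $p>0$, take $p=\ve$ and let $\ve\to0$; the term $\tfrac{p}{q}\G(\ap)$ then vanishes. This yields
\[
x(t)\le x(0)\,E_\ap(-2\delta t^\ap)<x(0)\,\frac{\G(1+\ap)}{\G(1+\ap)+2\delta(P)t^\ap}=x(0)\,\mu_\ap(P,t)\to 0 .
\]
Hence $\deg_s(\mathcal{MW})=0$ at the stated rate, which is uniform in the sense of Definition \ref{Def}.

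\textbf{Main obstacle.} The inequality holds only for $t>T_0$, whereas the Caputo derivative is nonlocal and its memory reaches back to $t=0$. I would try first to avoid $T_0$ altogether. Replace $g_K(|R_j(0)|e^{-t/(2\tau_K)}+H)$ by a constant valid for all $t\ge0$. Alternatively, restrict to initial data in the absorbing set $B^*_\ap$, using the semigroup-free estimate of Theorem \ref{T3}, so that $|R_j|\le\sqrt{G_\ap}$ throughout. Failing that, I would restart the fractional equation at $T_0$ via the shifted Caputo derivative, and absorb the resulting history term using the nonnegativity and monotone-decay properties of the kernel. Either way, the decay rate $\mu_\ap$ is unchanged, since only the constant multiplying it depends on initial data.
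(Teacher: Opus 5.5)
Your plan follows the paper's route: Lemma~\ref{L1}, the estimates leading to \eqref{ky}--\eqref{DG}, then Lemma~\ref{L2}. You also correctly notice two things the paper passes over: \eqref{Rib} is an integer-order bound, and \eqref{DG} holds only for $t>T_0$, yet Lemma~\ref{L2} is applied on all of $(0,\infty)$ starting from $x(0)$.

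\textbf{The $T_0$ gap.} Your Step~3 makes exactly that application, so the gap remains open. It cannot be closed in the form you state, because $x(t)\le\mu_\ap(P,t)\,x(0)$ is false in general. Take $R_i^0=R_j^0=-L$ (so $\Pi(0)=0$) and $U(0)\neq0$. At $t=0$ the right-hand side of the $U$-equation equals $c_0U(0)$ with $c_0=g_KL-nP+O(1)$, where the $O(1)$ depends only on $(V^0,\rho^0)$. Hence $U(t)=U(0)+\frac{t^\ap}{\G(1+\ap)}\bigl(c_0U(0)+o(1)\bigr)$, and $x(t)>x(0)$ for small $t>0$ once $L$ is large, for any fixed $P>P_*$.

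For the same reason your first two remedies do not prove the stated theorem. An all-time bound on $-g_KR_j$ must contain $|R_j^0|$, so the threshold would depend on the data. Restricting to $B_\ap^*$ drops the supremum over all initial states in Definition~\ref{Def} and replaces $g_K(1+H)$ by a $G_\ap$-dependent constant. It also cannot be transferred to general data by restarting at an entrance time, because the Caputo system has memory.

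The third remedy is the right one, but as written it is only a hope. For $t>T_0$ write $\dca x={}_{T_0}D_c^{\ap}x+h$, with history term $h(t)=\frac{1}{\G(1-\ap)}\int_0^{T_0}(t-s)^{-\ap}x'(s)\,ds$. This term has no sign, since nothing controls $x'$ on $[0,T_0]$. What does work is the following.
\begin{itemize}
\item Integrate by parts and use $0\le x\le M:=\max_{[0,T_0]}x$. This gives $-h(t)\le\frac{M-x(T_0)}{\G(1-\ap)}(t-T_0)^{-\ap}$.
\item Hence ${}_{T_0}D_c^{\ap}x\le-2\delta x+\frac{M-x(T_0)}{\G(1-\ap)}(t-T_0)^{-\ap}$ for $t>T_0$.
\item Comparing Laplace transforms, $\int_0^r s^{\ap-1}E_{\ap,\ap}(-2\delta s^\ap)\frac{(r-s)^{-\ap}}{\G(1-\ap)}\,ds=E_\ap(-2\delta r^\ap)$, and $E_{\ap,\ap}(-z)\ge0$.
\item The comparison principle then yields $x(t)\le M\,E_\ap(-2\delta(t-T_0)^\ap)\le M\,\mu_\ap(P,t-T_0)$.
\end{itemize}
So the rate \eqref{map} survives, but with the data-dependent constant $M$ and the shift $T_0$. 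It does not hold with $x(0)$, as claimed in your Step~3 and in the paper's final estimate.

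\textbf{The cross term.} Your phrase ``estimated exactly as in \eqref{Ut}'' imports an invalid step. There $-g_K(R_i-R_j)(V_i-E_K)U$ is rewritten by the mean value theorem for $R_\infty$, as though $R_i=R_\infty(V_i)$. In \eqref{Feq}, however, $R_i$ is an independent state variable. The term is therefore $-g_K\Pi(V_i-E_K)U$, a cubic cross term that is not of the form $c\,U^2$ and is not covered by the splittings \eqref{gT}.

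It can be handled, for example, with the weighted quantity $U^2+w\Pi^2$, $w=4\tau_Kg_K^2/a_2$.
\begin{itemize}
\item Then $g_K|V_i||\Pi||U|\le\frac{w}{4\tau_K}\Pi^2+\frac{a_2}{4}V_i^2U^2$.
\item The last term is absorbed by $a_2(V_i^2+V_iV_j+V_j^2)U^2$, because $\frac34V_i^2+V_iV_j+V_j^2$ is positive definite.
\item The remaining cross terms $\frac{w|\gl|H}{4\tau_K}|\Pi||U|$ and $g_K|E_K||\Pi||U|$ cost another $\frac{w}{2\tau_K}\Pi^2$, plus constant multiples of $U^2$ that $nP$ must dominate.
\end{itemize}
The threshold constant must then be recomputed, since \eqref{Q} was derived from the incorrect identity.

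\textbf{The $\G(\ap)$ worry.} By contrast, your worry that $\G(\ap)$ spoils the bound on $-g_KR_j$ is unfounded. The variation-of-constants formula for the $R_j$-equation, together with $R_\infty>0$ and $E_{\ap,\ap}(-z)\ge0$, gives $R_j(t)\ge R_j^0E_\ap(-t^\ap/\tau_K)$. Hence $-g_KR_j(t)\le g_K|R_j^0|E_\ap(-t^\ap/\tau_K)\le g_K$ for $t>T_0$, which is within $g_K(1+H)$.
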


\begin{proof}   
For any given indices $1 \leq i \neq j \leq n$, denote by $W_{ij}(t) = V_i(t) - V_j(t)$ and $\Pi_{ij}(t) = R_i(t) - R_j(t)$ in regard to this fractional memristive Hodgkin-Huxley-Wilson neural network $\mathcal{MW}$.  

By the memristive synapse property \eqref{me}, similar to \eqref{Ut} we have the estimation inequality about the gap functions of the membrane potential:
\beq \bl{Wt}   
	\begin{split}
	\frac{1}{2} \dca\,W_{ij}^2 \leq &\, - \left(a_0 - \frac{k}{2\gb} + a_2 (V_i^2 + V_j^2)\right) W_{ij}^2   \\
	&\, + \left[ a_1V_j + a_2 E_{N_a}V_i + (E_{N_a} - V_i) (a_1 + a_2 V_j) \right] W_{ij}^2   \\[4pt]
	&\, + g_K \gl H (E_K - V_i) W_{ij}^2 - g_K R_j(V_j) W_{ij}^2 - n PW_{ij}^2,  \quad  t > 0,
	\end{split}
\eeq 
and similar to \eqref{PU} the combined estimation inequality with the gap functions of the recovery potassium channel:
\beq \bl{PW}
	\begin{split}
	&\frac{1}{2} \dca (W_{ij}^2 + \Pi_{ij}^2) \leq - \left[ \frac{1}{\tau_k} \Pi^2 + \left( a_0 - \frac{k}{2\gb} + a_2 (V_i^2 + V_i V_j + V_j^2) + n P\right)W_{ij}^2 \right]   \\
	&\, + \frac{1}{\tau_K} \gl H \Pi_{ij}\,W_{ij}   \\[3pt]
	&\, + \left[ a_1 (E_{N_a}- V_i + V_j) + a_2 E_{N_a}(V_i + V_j) + g_K \gl H (E_K - V_i) - g_K R_j(V_j)\right] W_{ij}^2.
	\end{split}
\eeq 
Through the same treatment \eqref{gT} term by term in \eqref{PW}, we obtain the following Gronwall-type differential inequality
\beq \bl{ky} 
	\begin{split}
	&\, \frac{1}{2} \dca (W_{ij}^2(t) + \Pi_{ij}^2(t))    \\
	\leq &\, - \left[ \frac{1}{2\tau_k} \Pi^2 + \left(a_0 - \frac{k}{2\gb} + \frac{1}{2\tau_K} \gl^2 H^2 + n P \right)W_{ij}^2 \right] + g_K \left( |R_i(0)| e^{- t/(2\tau_K)} + H \right)W_{ij}^2   \\
	&\, + \left(a_1 E_{N_a} + g_K \gl H E_K \right) W_{ij}^2 + \left[\frac{6a_1^2}{a_2} + \frac{6a_2}{E_{N_a}^2} + \frac{6}{a_2} (g_K \gl H)^2\right] W_{ij}^2  \\
	\leq &\, - \left[ \frac{1}{2\tau_k} \Pi^2(t) + \left(a_0 - \frac{k}{2\gb} + \frac{1}{2\tau_K} \gl^2 H^2 + n P \right) W_{ij}^2(t) \right]    \\
	&\, + \left[ g_K (1 + H) + |a_1 E_{N_a}| + g_K \gl H |E_K| +\frac{6a_1^2}{a_2} + \frac{6a_2}{E_{N_a}^2} + \frac{6}{a_2} (g_K \gl H)^2 \right] W_{ij}^2(t) \\
	\leq &\, - \left[ \frac{1}{2\tau_k} \Pi^2(t) + \left(a_0 - \frac{k}{2\gb} + \frac{1}{2\tau_K} \gl^2 H^2 + n P  - Q\right) W_{ij}^2(t) \right], \;\; t > T_0,
	\end{split}
\eeq
because there exists a finite time $T_0 > 0$ depending on the initial state $(V^0, R^0)$ of a solution such that $|R_i(0)| e^{- t/(2\tau_K)} \leq 1$ for any $t > T_0$. Here the positive constant $Q$ is still the same as in \eqref{Q},  
$$
	Q = g_K (1 + H) + |a_1 E_{N_a}| + g_K \gl H |E_K| +\frac{6a_1^2}{a_2} + \frac{6a_2}{E_{N_a}^2} + \frac{6}{a_2} (g_K \gl H)^2.
$$
From the inequality \eqref{ky}, we see 
\beq \bl{WR}
	\frac{1}{2} \dca (W_{ij}^2 + \Pi_{ij}^2) \leq - \min \left\{ \frac{1}{2\tau_k}, \left[a_0 - \frac{k}{2\gb} + \frac{1}{2\tau_K} \gl^2 H^2 + n P - Q \right] \right\} (W_{ij}^2 +  \Pi_{ij}^2)
\eeq
for $ t > T_0$. If the threshold condition \eqref{Resh} in this theorem is satisfied so that  
$$
	a_0 + \frac{1}{2\tau_K} \gl^2 H^2 + n P > Q + \frac{k}{2\gb} ,
$$
then \eqref{WR} is simply 
\beq  \bl{DG}
	\frac{1}{2} \dca (W_{ij}^2 + \Pi_{ij}^2) \leq - \delta (P) (W_{ij}^2 + \Pi_{ij}^2), \quad  t > T_0,
\eeq
and $\delta (P)$ given in \eqref{rt} is a positive constants.

Finally we can apply the fractional Gronwall inequality \eqref{pq}-\eqref{fG} to solve this linear differential inequality with respect to $(W_{ij}^2(t), \Pi_{ij}^2(t))$ and reach the conclusion that for any initial state $(V^0, R^0) \in \mathbb{R}^{2n}$, all the gap functions $W_{ij}(t) = V_i(t) - V_j(t)$ and $\Pi_{ij}(t) = R_i(t) - R_j(t)$ as $t \to \infty$ converge to zero at a uniform but not exponential decaying rate $\mu_{\ap} (P, t)$ shown in \eqref{map}. Namely, for any initial state $(V^0, R^0) \in \mathbb{R}^{2n}$ and any $1 \leq i \neq j \leq n$,
\beq \bl{cov}
	\begin{split}
	\|W_{ij}(t) &\, \|^2 + \|\Pi_{ij}(t)\|^2 = \|V_i(t) - V_j(t)\|^2 + \|R_i(t) - R_j(t)\|^2   \\[2pt]
	\leq &\, \frac{\G(1 + \ap)}{\G (1 +\ap) + 2\delta (P) t^{\ap}} \left(\|V_i^0 - V_j^0\|^2 + \|R_i^0 - R_j^0\|^2 \right)   \\[3pt]
	= &\, \mu_{\ap}(P, t) \left(\|V_i^0 - V_j^0\|^2 + \|R_i^0 - R_j^0\|^2 \right) \to 0, \;\; \text{as} \;\; t \to \infty.
	\end{split}
\eeq
According to Definition \eqref{Def}, the fractional memristive Hodgkin-Huxley-Wilson neural network $\mathcal{MW}$ is complete synchronized at a fractional-power but non-exponential convergence rate. The proof is completed. 
\end{proof} 

As a remark, the memristor window function $\psi (s) = s(1 - \gb s)$ in this model \eqref{Feq} of fractional memristive Hodgkin-Huxley-Wilson neural networks is for the typical Titanium dioxide memristors including the CMOS memristors (complementary metal-oxide-semiconductor circuits). The proof in this section can certainly be adaptable to deal with other types of memristor window functions such as the piecewise linear functions, hyperbolic-tangent functions, or Strukov-Williams memristors, etc.   

\section{\textbf{Conclusions}}

Hodgkin-Huxley equations established in 1952 is the primary mathematical model of biological neurons and this model laid down a theoretical foundation for neuroscience as well as the modern researches on biologically inspired neural networks. However, from mathematical point of view, the global and asymptotic dynamics of this higher dimensional and strongly nonlinear model  had not been deeply investigated in comparison with several simplified neuron and neural network models like FitzHigh-Nagumo equations and Hindmarsh-Rose equations.

In this paper we introduced and investigated a new model of biological neural networks, which is called Hodgkin-Huxley-Wilson neural networks. This model captured the essential features of the original Hodgkin-Huxley equations in terms of the quadratic nonlinearity as approximation of the exponential nonlinearity as well as the conductance coupling of two key ionic gating channels of sodium and potassium. We emphasize that the analytic proof methodology and techniques in this work can be adapted and generalized to explore future models of neuron networks with even higher order polynomial nonlinearity and with multiple ionic or other dynamic parameter (such as temperature and hyperpolarizing current) channels. 

Asymptotic synchronization is an important topic about cognitive functionality and signals processing for biological neural networks as well as artificial neural networks in machine learning. Synchronization is more diversified and complicated than the classical stability properties converging to equilibrium states. It also encompasses chaotic synchronization and various types of approximate synchronization \cite{YC}. 

The main results in this paper are Theorem \ref{T2} and Theorem \ref{ThM}. First we show the existence of absorbing set and find a sharp ultimate bound. Then we carry on \emph{a priori} estimates of the differencing equations for gap functions through to the end, which is a Gronwall differential inequality leading to a sufficient threshold condition for complete exponential synchronization if satisfied by the cross-network coupling strength. 

The contribution of this work lies not only in the established explicit expression of the synchronization threshold and the exponential convergence rate in terms of the original model parameters, but also lies in the purely analytic proof methodology and techniques by the leverage of hard analysis. 

Furthermore we have extended the main result to Caputo fractional memristive Hodgkin-Huxley-Wilson neural networks in Theorem \ref{T4} through fractional Gronwall inequality. The derived synchronization threshold condition in this fractional case features a uniform but non-exponential convergence rate. 

Any new model together with a new theory has its limitation. It is conceivable and promising that one can further pursue the exploration of synchronization based on Hodgkin-Huxley-Wilson neural networks in following two directions. It is challenging but one can apply the quadratic nonlinear approximation instead the sigmoidal rest function for the recovery potassium ionic channel equation in a new model. Moreover one can generalize Hodgkin-Huxley-Wilson neural networks from the ODE model to a PDE model consisting of diffusive membrane potential equation coupled with multiple ionic channel equations, which is biologically justified at least in one-dimensional case because the neuron cells have long axon branches.  

\vspace{4pt}
\textbf{Funding}: This research received no external funding.

\vspace{4pt}
\textbf{Data Availability Statement}: The original contributions in this study are presented in the article. Further inquiries can be directed to the author.

\vspace{4pt}
\textbf{Conflicts of Interest}: The author declares no conflicts of interest.


\begin{thebibliography}{99}

\bibitem{HH}
A.L. Hodgkin, A.F. Huxley, \emph{A quantitative description of membrane current and its application to conduction and excitation in nerve}, The Journal of Physiology, \textbf{117}(4) (1952), 500-544.

\bibitem{HH1}
A.L. Hodgkin,  A.F. Huxley, \emph{Currents carried by sodium and potassium ions through the membrane of the giant axon of Loligo}, The Journal of Physiology,  \textbf{116}(4) (1952), 449-472.

\bibitem{HH2}
A.L. Hodgkin, A.F. Huxley, \emph{The components of membrane conductance in the giant axon of Loligo}, The Journal of Physiology,  \textbf{116}(4) (1952), 473-496.

\bibitem{HH3}
A.L. Hodgkin,  A.F. Huxley, \emph{The dual effect of membrane potential on sodium conductance in the giant axon of Loligo}, The Journal of Physiology,  \textbf{116}(4) (1952), 497-506.

\bibitem{HHB}
A.L. Hodgkin, A.F. Huxley, B. Katz, \emph{Measurement of current-voltage relations in the membrane of the giant axon of Loligo}, The Journal of Physiology,  \textbf{116}(4) (1952), 424-448.

\bibitem{KS}
J. Keener, J. Sneyd, \emph{Mathematical Physiology}, Springer-Verlag, New York, 1998.

\bibitem{ET}
G.B. Ermentrout, D.H. Terman, \emph{Mathematical Foundation of Neuroscience}, Springer, New York, 2010.

\bibitem{Wi}
H.R. Wilson, \emph{Simplified dynamics of human and mammalian neocortical neurons}, J. Theoretical Biology, \textbf{200} (1999), 375-388.

\bibitem{FHN}
R. FitzHugh, \emph{Impulses and physiological states in theoretical model of nerve membrane}, Biophysics Journal, \textbf{1} (1961), 445-466.

\bibitem{HR}
J.L. Hindmarsh, R.M. Rose, \emph{A model of neuronal bursting using three coupled first-order differential equations}, Proceedings of the Royal Society, London, Series B: Biological Sciiences, \textbf{221} (1984), 87-102.

\bibitem{ML}
C. Morris, H. Lecar, \emph{Voltage oscillations in the barnacle giant muscle fiber}, Biophysics Journal, \textbf{35}(1) (1981), 193.

\bibitem{Zh}
E.M. Izjikevich, \emph{Simple model of spiking neurons}, IEEE Trans. Neural Networks, \textbf{14} (2003), 1569-1572.

\bibitem{Cr}
J. Cronin, \emph{Mathematical Aspects of Hodgkin-Huxley Neural Theory}, Cambridge Univ. Press, Cambridge, UK, 1987. 

\bibitem{Ev}
J. Evans, \emph{Nerve axon equations IV: the stable and the unstable impulse}, Indiana Univ. Math. Journal, \textbf{24} (1975), 1160-1190.

\bibitem{FY}
W.E. Fitzgibbon, M. Parrot, Y. You, \emph{Finite dimensionality and upper semicontinuity of the global attractor of singularly perturbed Hodgkin-Huxley systems}, Journal of Differential Equations, \textbf{129}(1) (1996), 193-237. 

\bibitem{JJ}
D. Jaeger, R. Jung, \emph{Encyclopedia of Computational Neuroscience}, Springer, New York, NY, 2002. 

\bibitem{GO}
J. Guckenheimer, R.A. Oliva, \emph{Chaos in the Hodgkin-Huxley model}, SIAM Journal on Applied Dynamical Systems, \textbf{1}(1) (2002), 105-114.

\bibitem{WCF}
J. Wang, L. Chen, X. Fei, \emph{Analysis and control of the bifurcation of Hodgkin-Huxley model}, Chaos, Solitons and Fractals, \textbf{31}(1) (2007), 247-256.

\bibitem{BVL}
C.A. Batista, R.L. Viana, S.R. Lopes, A.M. Batista, \emph{Dynamic range in small-world networks of Hodgkin-Huxley neurons with chemical synapses}. Physica A: Statistical Mechanics and Its Applications, \textbf{410} (2014), 628-640.

\bibitem{LSY}
L. Skrzypek, Y. You, \emph{Dynamics and synchronization of boundary coupled FitzHugh-Nagumo neural networks}, Applied Mathematics and Computation, \textbf{388} (2020), 125545.

\bibitem{PYS}
C. Phan, Y. You, J. Su, \emph{Global dynamics of partly diffusive Hindmarsh-Rose equations in neurodynamics}, Dynamics of Partial Differential Equations, \textbf{18}(1) (2021), 33-47.

\bibitem{CY3}
C. Phan, Y. You, \emph{Random attractor for stochastic Hindmarsh-Rose equations with. additive noise}, Journal of Dynamics and Differential Equations, \textbf{33} (2021), 489-510.

\bibitem{AB}
M. Abeles, Y. Prut, H. Bergman, E. Vaadia, \emph{Synchronization in neuronal transmission and its importance for information processing}, Progress in Brain Research, \textbf{102} (1994), 395-404.

\bibitem{BP}
S. Boccaletti, A.N. Pisarchik, C.I. Del Genio, A. Amann, \emph{Synchronization From Coupled Systems to Complex Networks}, Cambridge University Press, Cambridge, UK, 2018.

\bibitem{HJ}
I. Hussain, S. Jafari, D. Ghosh, M. Perc, \emph{Synchronization and chimeras in a network of photosensitive FitzHugh-Nagumo neurons}, Nonlinear Dynamics, \textbf{104} (2021),  2711-2721.

\bibitem{WD}
Y. Wu et al, \emph{Dynamic learning of synchronization in coupled nonlinear systems}, Nonlinear Dynamics, https://doi.org/10.1007/s11071-024-10192-y, 2024. 

\bibitem{Y1}
Y. You, \emph{Global dynamics of diffusive Hindmarsh-Rose equations with memristors}, Nonlinear Analysis: Real World Applications, \textbf{71} (2023), 103827.

\bibitem{Y2}
Y. You, \emph{Exponential Synchronization of Memristive Hindmarsh-Rose Neural Networks}, Nonlinear Analysis: Real World Applications, \textbf{73} (2023), 103909.

\bibitem{YTT}
Y. You, J. Tian, J. Tu, \emph{Synchronization of memristive FitzHugh-Nagumo neural networks}, Communications in Nonlinear Science and Numerical Simulation, \textbf{125} (2023), 107405.

\bibitem{YT}
Y. You and J. Tu, \emph{Dynamics and synchronization of weakly coupled memristive reaction-diffusion neural networks}, Dynamics of Partial Differential Equations, \textbf{22} (2025), 1-28.

\bibitem{CPF}
O. Cornejo-Perez, R. Femat, \emph{Unidirectional synchronization of Hodgkin-Huxley neurons}, Chaos, Solitons and Fractals, \textbf{25} (2005), 43-53.

\bibitem{BB}
C. Batista, R.L. Viana, F. Ferrari, S.R. Lopes, A.M. Batista, J. Coninck, \emph{Control of bursting synchronization in networks of Hodgkin-Huxley-type neurons with chemical synapses}, Physical Review E, \textbf{87} (2013), 042713.

\bibitem{PL}
T. Prado, S.R. Lopes, C. Batista, J. Kurths, R.L. Viana, \emph{Synchronization of bursting Hodgkin-Huxley-type neurons in clustered networks}, Physical Review E, \textbf{90} (2014), 032818.

\bibitem{BAA}
S.Y. Bonabi, H. Asgharian, S. Safari, M.N. Ahmadabadi, \emph{EPGA implementation of a biological neural network based on the Hodgkin-Huxley neuron model}, Frontiers in Neuroscience, \textbf{8} (2014), 00379.

\bibitem{AO}
B. Ambrosio, M.A. Aziz-Alaoui, A. Oujbara, \emph{Synchronization in a three level network of all-to-all periodically forced Hodgkin-Huxley reaction-diffusion equations}, Mathematics, \textbf{12}(9) (2024), 1382. https://doi.org/10.3390/math12091382.

\bibitem{SS}
S. Saghafi, P. Sanaei, \emph{Dynamic entrainment: A deep learning and data-driven process approach for synchronization in the Hodgkin-Huxley model}, Chaos, \textbf{34}(10) (2024), 103124. 

\bibitem{WLZ}
L. Wei, D. Li, J. Zhang, \emph{Dynamics and synchronization of the Morris-Lecar model with field coupling subject to electromagnetic excitation}, Communications in Nonlinear Science and Numerical Simulation, \textbf{140} (2025), 108457.

\bibitem{Qi}
Y. Qi, A.L. Watts, J.W. Kim, P.A. Robinson, \emph{Firing patterns in a conductance-based neuron model: bifurcation, phase diagram, and chaos}, Biological Cybernetics, \textbf{107} (2013), 15-24.

\bibitem{VMN}
G. Vivekanandan, M. Mehrabbeik, H. Natiq, K. Rajagopal, E. Tlelo-Cuautle, \emph{Fractional-order memristive Wilson neuron model: dynamical analysis and synchronization patterns}, Mathematics, \textbf{10} (2022), 2827.

\bibitem{XJ}
Q. Xu, Z. Ju, S. Ding, C. Feng, M. Chen, B. Bao, \emph{Electromagnetic induction effects on electrical activity within a memristive Wilson neuron model}, Cognitive Neurodunamics, \textbf{16} (2022), 1221-1231.

\bibitem{Rz}
J. Rinzel, \emph{Excitation dynamics: insights from simplified membrane models}, Federation Proceedings, \textbf{44} (1985), 2944-2946.

\bibitem{CG}
B.W. Connors, M.J. Gutnick, D.A. Prince, \emph{Electrophysiological properties of neocortical neurons in vitro}, J. Neurophysiology, \textbf{48} (1982), 1302-1320.

\bibitem{RE}
J. Rinzel, G.B. Ermentrout, \emph{Analysis of neural excitability and oscillations}, in Methods in Neuronal Modelling: from Synapses to Networks, (C. Koch and L. Segev eds), MIT Press, Cambridge, MA, 1989, pp. 135-169.

\bibitem{EH}
J.Engel, H.A. Schultens, D. Schild, \emph{Small conductance potassium channels cause an activity-dependent spike frequency adaptation and make the transfer function of neuron logarithmic}, Biophysical Journal, \textbf{76} (1999), 1310-1319.

\bibitem{CV}
V.V. Chepyzhov, M.I. Vishik, \emph{Attractors for Equations of Mathematical Physics}, American Mathematical Society, Providence, RI, 2002.

\bibitem{YC} 
Y. You, Approximate synchronization of memristive Hopfield neural networks, Axioms, \textbf{15}(3) (2026), 185. 

\bibitem{XY}  
G. Xu, L. Yao, Z. Li, \emph{Review of the research of spiking neuron network based on memristors}, J. Biomedical Engineering, \textbf{35} (2018), 475-480. 

\bibitem{KD}
K. Diethelm, \emph{The Analysis of Fractional Differential Equations}, Springer-Verlag, Berlin, 2010.

\bibitem{BJ}
B. Jin. \emph{Fractional Differential Equations -An Approach via Fractional Derivatives}, Springer, Cham, Switzerland, 2021.

\bibitem{YY}
Y. You, \emph{Robust synchronization of time-fractional memristive Hopfield neural networks}, Axioms, \textbf{15}(1) (2026), 37.

\bibitem{DK} 
T.S. Doan, P.E. Kloeden, \emph{Semi-dynamical systems generated by autonomous Caputo fractional Differential equations}, Vietnam Journal of Mathematics, \text{49} (2021), 1305-1315. 

\bibitem{PK} 
P.E. Kloeden, \emph{An elementary inequality for dissipative Caputo fractional differential equations}, Fractional Calculus and Applied Analysis, \textbf{26} (2023), 2166-2174.

\end{thebibliography}
\end{document}